\theoremstyle{plain}
\newtheorem{theorem}{Theorem}[section]
\newtheorem{corollary}[theorem]{Corollary}
\newtheorem{lemma}[theorem]{Lemma}
\newtheorem{proposition}[theorem]{Proposition}
\theoremstyle{definition}
\theoremstyle{remark}
\newtheorem*{remark}{Remark}
\newtheorem*{remarks}{Remarks}
\numberwithin{equation}{section}
\newcommand{\R}{\mathbb R}
\newcommand{\N}{\mathbb N}
\newcommand{\C}{\mathbb C}
\newcommand{\RE}{\operatorname{Re}}
\newcommand{\IM}{\operatorname{Im}}
\newcommand{\Log}{\operatorname{Log}}
\newcommand{\Arg}{\operatorname{Arg}}
\newcommand{\NN}{\mathcal{N}_N}
\renewcommand{\b}[1]{\boldsymbol{#1}}
\begin{document}
\allowdisplaybreaks


\begin{abstract}
We discuss two theorems in analytic number theory and combinatory analysis that have seen increased use in recent years. A corollary to a Tauberian theorem of Ingham allows one to quickly prove asymptotic formulas for arithmetic sequences, so long as the corresponding generating function exhibits exponential growth of a certain form near its radius of convergence. Two common methods for proving the required analytic behavior are
modular transformations and Euler-Maclaurin summation.
 However, these results are sometimes stated without certain technical conditions that are necessary for the complex analytic techniques that appear in Ingham's proof. We carefully examine the precise statements and proofs of these results, and find that in practice,
the technical conditions are satisfied for those cases appearing in recent applications. We also generalize the classical approach of Euler-Maclaurin summation in order to prove asymptotic expansions for series with complex values, simple poles, or multi-dimensional summation indices.
\end{abstract}

\title{On a Tauberian theorem of Ingham and Euler-Maclaurin summation}
\author[K. Bringmann]{Kathrin Bringmann}
\address{University of Cologne, Faculty of Mathematical and Natural Sciences, Mathematical Institute, Weyertal 86-90, 50931 Cologne, Germany}
\email{kbringma@math.uni-koeln.de}

\author[C. Jennings-Shaffer]{Chris  Jennings-Shaffer}
\address{Department of Mathematics, University of Denver, Denver, CO 80208, USA
\newline University of Cologne, Faculty of Mathematical and Natural Sciences, Mathematical Institute, Weyertal 86-90, 50931 Cologne, Germany}
\email{chrisjenningsshaffer@gmail.com}

\author[K. Mahlburg]{Karl Mahlburg}
\address{Department of Mathematics, Louisiana State University, Baton Rouge, LA 70803, USA}
\email{mahlburg@math.lsu.edu}

\thispagestyle{empty} \vspace{.5cm}
\maketitle

\section{Introduction and statement of results}

In mathematics one often encounters sequences $\{b_n\}_{n\in\N_0}$ whose terms enumerate the objects in some family of interest. Although the problem of finding closed-form expressions for the $b_n$ is often intractable, for some applications it is sufficient to determine the asymptotic behavior of $b_n$. A powerful technique is to consider the generating function of the sequence as a complex analytic power series, as its asymptotic analytic behavior can provide information about the asymptotic behavior of the $b_n$. In this article we revisit Ingham's Tauberian theorem \cite{Ingham1}, which was devised to carry out the above idea for a class of sequences related to modular forms and the combinatorics of integer partitions.

Recall that a {\it partition} of a
non-negative integer $n$ is a weakly decreasing sequence of positive integers
that sum to $n$, and that the {\it partition function} $p(n)$ denotes the number of partitions of $n$.
For example, $p(5)=7$ and the relevant partitions are:
$(5)$, $(4,1)$, $(3,2)$, $(3,1,1)$, $(2,2,1)$, $(2,1,1,1)$, and
$(1,1,1,1,1)$. The function $p(n)$ does not have a closed form, nor does it
satisfy any finite order recurrence. However, its asymptotic behavior was proven by Hardy and Ramanujan \cite{HardyRamanujan1}, who showed that
\begin{align}\label{Eq:PartitionAsymptotic}
p(n)\sim \frac{1}{4\sqrt{3}n}e^{\pi \sqrt{\frac{2n}{3}}}
\qquad\qquad
\mbox{as }n\rightarrow\infty.
\end{align}
In fact, they obtained a much stronger result by introducing what is now known as the Hardy-Ramanujan Circle Method, which uses modular transformations to obtain a divergent series whose truncations approximate $p(n)$ with a very small error (a later refinement of Rademacher \cite{Rademacher1} gave a convergent series representation for $p(n)$).

Ingham \cite{Ingham1} showed that \eqref{Eq:PartitionAsymptotic} can also be derived from a certain Tauberian theorem (see Section \ref{S:Ingham} below).  This approach has recently seen increased use in combinatorics and number theory, including applications in plane partitions \cite{HX}, $t$-core partitions \cite{Rol}, overpartitions \cite{Chen, CKL}, partitions arising from permutation groups \cite{CDF}, families of partitions with certain ``gap'' conditions \cite{KK}, and bounds for the coefficients of modular functions \cite{BrisebarrePhilibert1}. In usage, Ingham's theorem is often stated as follows:
Suppose that $B(q)=\sum_{n\ge0}b_nq^n$ is
a power series with weakly increasing non-negative coefficients and radius
of convergence $1$. If $\lambda$, $\beta$, and $\gamma$ are real numbers with
$\gamma>0$ such that
$B(e^{-t}) \sim \lambda t^\beta e^{\frac{\gamma}{t}}$
as $t\rightarrow0^+$,
then
\begin{gather*}
b_n
\sim
	\frac{ \lambda   \gamma^{\frac{\beta}{2}+\frac14 }}
	{2\sqrt{\pi} n^{\frac{\beta}{2}+\frac34 }}
	e^{2\sqrt{\gamma n}}
\qquad\qquad\mbox{as } n\rightarrow\infty.
\end{gather*}

However, this is not quite correct as written, as it is missing an important technical condition
from Ingham's work.
In particular, the analytic behavior of $B(e^{-z})$ for $z \to 0^+$ along the real axis is not sufficient in general to determine the asymptotic behavior of the coefficients $b_n$, as one also needs to consider $B(e^{-z})$ for complex values of $z$ (see Section \ref{S:Examples:Counterex} below for some counterexamples).
The full statement of Ingham's theorem from \cite{Ingham1} is given in Theorem \ref{Theorem:Ingham} below, and the following result includes all necessary conditions for $B(e^{-z})$. 
The general statement also includes an additional logarithmic term that has 
been needed in some recent applications (see for example \cite{BJSM}).

\begin{theorem}\label{Corollary:CorToIngham}
Suppose that $B(q)=\sum_{n\ge0}b_nq^n$ is a power series with non-negative real
coefficients and radius of convergence at least one. If
$\lambda$, $\alpha$, $\beta$, and $\gamma$ are real numbers with $\gamma>0$ such that
\begin{equation}\label{additional}
B\left(e^{-t}\right) \sim \lambda \log\left( \tfrac{1}{t} \right)^\alpha t^\beta e^{\frac{\gamma}{t}}
\quad\mbox{as } t\rightarrow0^+,
\qquad
B\left(e^{-z}\right) \ll \log\left( \tfrac{1}{|z|} \right)^\alpha |z|^\beta e^{\frac{\gamma}{|z|}}
\quad\mbox{as } z\rightarrow0,
\end{equation}
with $z=x+iy$ ($x,y\in\R, x>0$) in each region of the form
$|y|\le \Delta x$ for $\Delta>0$,
then
\begin{equation}\label{asbsum}
\sum_{n=0}^{N}b_n
\sim
	\frac{ \lambda \gamma^{\frac{\beta}{2}-\frac14 }  \log\left( N \right)^\alpha  }
	{2^{\alpha+1}\sqrt{\pi} N^{\frac{\beta}{2}+\frac14 }}
	e^{2\sqrt{\gamma N}}
\qquad\qquad\mbox{as } N\rightarrow\infty.
\end{equation}
Furthermore, if the $b_n$ are weakly increasing, then
\begin{gather}\label{asb}
b_n
\sim
	\frac{ \lambda\gamma^{\frac{\beta}{2}+\frac14 }  \log\left( n \right)^\alpha  }
	{2^{\alpha+1}\sqrt{\pi} n^{\frac{\beta}{2}+\frac34 }}
	e^{2\sqrt{\gamma n}}
\qquad\qquad\mbox{as } n\rightarrow\infty.
\end{gather}
\end{theorem}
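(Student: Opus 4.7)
The proof proceeds in two stages: \eqref{asbsum} is obtained by a direct application of Ingham's Tauberian theorem (Theorem~\ref{Theorem:Ingham}), and \eqref{asb} is then extracted from \eqref{asbsum} by a monotonicity squeeze.

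For the first stage, I would verify that the hypotheses of Theorem~\ref{Theorem:Ingham} are precisely those collected in \eqref{additional}: the real asymptotic for $B(e^{-t})$ as $t\to 0^+$ together with the matching upper bound in every sector $|y|\le \Delta x$. Here the prefactor $\lambda\log(1/t)^{\alpha}t^{\beta}$ plays the role of Ingham's slowly varying coefficient. The conclusion of that theorem, evaluated at the natural saddle $t_N:=\sqrt{\gamma/N}$, then produces the desired partial-sum asymptotic; the simplification $\log(1/t_N)=\tfrac12\log(N/\gamma)\sim\tfrac12\log N$ together with routine bookkeeping of the powers of $\gamma$ and $N$ rearranges the constants into the form shown on the right-hand side of \eqref{asbsum}.

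For the second stage, denote by $F(N)$ and $G(n)$ the right-hand sides of \eqref{asbsum} and \eqref{asb} respectively, and set $S(N):=\sum_{n\le N}b_n$; a direct computation gives $G(N)=F(N)\sqrt{\gamma/N}(1+o(1))$. Fix $c>0$ and put $h=h_N:=\lfloor c\sqrt{N/\gamma}\rfloor$. The weak monotonicity of $\{b_n\}$ yields
\[
\frac{S(N)-S(N-h)}{h}\;\le\;b_N\;\le\;\frac{S(N+h)-S(N)}{h}.
\]
Since $2\sqrt{\gamma(N\pm h)}=2\sqrt{\gamma N}\pm c+O(N^{-1/2})$, one has $F(N\pm h)/F(N)\to e^{\pm c}$, so substituting \eqref{asbsum} into both ends and dividing by $h$ leads to
\[
\frac{1-e^{-c}}{c}\;\le\;\liminf_{N\to\infty}\frac{b_N}{G(N)}\;\le\;\limsup_{N\to\infty}\frac{b_N}{G(N)}\;\le\;\frac{e^{c}-1}{c}.
\]
Letting $c\to 0^+$ then squeezes both sides to $1$, which is \eqref{asb}.

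The main technical point is the choice of window $h_N\asymp\sqrt{N}$. Any smaller $h$ would be swallowed by the $o(F(N))$ error in \eqref{asbsum}, because then $F(N\pm h)-F(N)$ would be of strictly lower order than $F(N)$; any larger $h$ (with $h/\sqrt{N}\to\infty$) would send $F(N+h)/F(N)$ to infinity and destroy the upper bound. The scale $h\asymp\sqrt{N/\gamma}$ is precisely the one at which $F(N\pm h)/F(N)$ has a finite nonzero limit and simultaneously $F(N)/h=\Theta(G(N))$, so that the sandwich delivers a nontrivial two-sided estimate on $b_N/G(N)$. Sending $c\to 0^+$ only after the limit in $N$ converts the finite-difference quotient into the exact derivative-type constant, completing the argument.
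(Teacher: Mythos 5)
Your argument is correct, and its second stage takes a genuinely different route from the paper. The paper deduces \eqref{asb} from \eqref{asbsum} by applying the partial-sum statement to the auxiliary series $C(q):=(1-q)B(q)$: monotonicity makes its coefficients $b_n-b_{n-1}$ non-negative, its partial sums telescope to $b_N$, and $C(e^{-z})\sim zB(e^{-z})$ (together with $|1-e^{-z}|\ll|z|$ in each sector) shows that the hypotheses \eqref{additional} hold with $\beta$ replaced by $\beta+1$, so \eqref{asb} drops out of \eqref{asbsum} immediately. Your monotonicity sandwich $\frac{S(N)-S(N-h)}{h}\le b_N\le\frac{S(N+h)-S(N)}{h}$ with window $h\asymp c\sqrt{N/\gamma}$, followed by $c\to0^+$, is a correct and standard alternative: it is more hands-on, makes the role of the scale $\sqrt{N}$ transparent, and avoids re-verifying the sectorial hypotheses for a modified series, whereas the paper's trick is shorter because it simply reuses the machinery already set up for \eqref{asbsum}. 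One caution about your first stage (which otherwise coincides with the paper's): the hypotheses of Theorem \ref{Theorem:Ingham} are not ``precisely'' \eqref{additional}. Condition (i) there requires the asymptotic for $B(e^{-z})$ as $z\to0$ throughout a complex domain $D$, while \eqref{additional} gives it only along the real axis; the paper therefore passes through Theorem \ref{Theorem:InghamPrime} (condition (i$'$), available because $-t^k\varphi'(t)$ decreases to $0$ for suitable $k$ when $\varphi(t)=\frac{\gamma}{t}$), and must explicitly verify conditions (a)--(d) for $\varphi(z)=\frac{\gamma}{z}$ and $\chi(z)=\lambda\Log\left(\frac1z\right)^\alpha z^\beta$ on the sector $|\Arg(z)|<\frac{\pi}{4}$, where $\delta(t)=\frac{t}{\sqrt2}$, finishing with the small step $A(N+1)\sim A(N)$ to pass from $A(x)=\sum_{n<x}b_n$ to $\sum_{n=0}^{N}b_n$. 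These verifications are where the real content of the first stage lies, but they are routine and do not affect the validity of your outline.
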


\begin{remarks} 1.
The conclusion of Theorem \ref{Corollary:CorToIngham} forces $B(q)$ to have radius of
convergence exactly one.  We also note that the second condition in \eqref{additional} does not follow from the first using a simple term-by-term estimate, as
\begin{equation*}
\left|B\left(e^{-z}\right)\right|  \leq \sum_{n \geq 0} b_n e^{-n \RE(z)} = B \left(e^{-\RE(z)}\right),
\end{equation*}
but $e^{\frac{\gamma}{\RE(z)}}$ is not $O(e^{\frac{\gamma}{|z|}})$
for complex $z \to 0$.  In fact, in Section \ref{S:Examples:Counterex} we see that the second condition is essential in general.

\noindent 2.
If in each region $|y|\le \Delta x$ we have
\begin{equation}
\label{E:Buniform}
B\left(e^{-z}\right)
\sim \lambda \Log\left( \frac{1}{z} \right)^\alpha z^\beta e^{\frac{\gamma}{z}},
\end{equation}
then the second bound in \eqref{additional} is automatically satisfied.
As explained in Section \ref{Example:Partitions},
this case holds for any modular form with a pole at $z = 0$. Here and throughout we follow the standard convention that $\Log$ denotes the principal branch of the logarithm, so that for $z \neq 0$, $\Log(z) = \log|z| + \Arg(z)$, with $\Arg(z) \in (-\pi, \pi].$
\end{remarks}

The appeal of Ingham's Tauberian theorem is that it yields
asymptotics for sequences with very little effort, particularly in comparison to the Circle Method, which typically requires modular transformations and bounds along various arcs near the complex unit circle. Fortunately, although the bound along the restricted angle $\Delta$ in Theorem \ref{Corollary:CorToIngham} has not always been mentioned explicitly,
the conclusion of the theorem statement still applies in all published applications that we are aware of.
Indeed, one of the purposes of this article is to show that the extra
condition is often guaranteed by whatever method is used to determine the
asymptotic growth of $B(e^{-t})$.
For example, as discussed in Section \ref{S:Ingham},
if the growth is determined by applying transformations of a modular form, then the required bound in the restricted angle is always  satisfied as well.

Another common method for determining the growth of $B(e^{-t})$ is to find an asymptotic expansion of $B(e^{-t})$ for $t$ near zero.
The classical Euler-Maclaurin summation
formula is (see e.g. \cite[equation (2.10.1)]{NIST})
\begin{align*}
\sum_{m=0}^M f(m)
&=
	\int_{0}^M f(x)dx
	+\frac12\left(f(M)+f(0)\right)		
	-
	\sum_{n=1}^{N-1} \frac{ B_{2n}}{(2n)!}\left( f^{(2n-1)}(M) - f^{(2n-1)}(0)  \right)
	\\&\quad
	+
	\int_{0}^M \frac{f^{(2N)}(x)\left( B_{2N}- B_{2N}\left(x-\lfloor x\rfloor\right)\right)}{(2N)!}dx
,
\end{align*}
where $M,N \in \N$,  $B_n(x)$ is the $n$-th Bernoulli polynomial, $B_n$  the $n$-th Bernoulli number,
and $f$ is continuous on the interval $[0,M]$ and $2N$-times continuously
differentiable inside the interval. In \cite{Zagier1}, Zagier gave an elegant account of how this formula implies asymptotic expansions of the form ($N \in \N_0$)
\begin{align}\label{Eq:ZagierEM1}
\sum_{m\ge0} f(t(m+a))
\sim
	\frac{1}{t} \int_{0}^\infty f(x) dx
	-
	\sum_{n=0}^{N-1} \frac{B_{n+1}(a) f^{(n)}(0) }{(n+1)!}t^n + O_N\left(t^N\right),
\end{align}
where $a\in \R^+$ and $f:(0,\infty)\rightarrow\mathbb{C}$ is a $C^\infty$ function such that
$f(x)$ and all of its derivatives are of ``rapid decay'' as $x\rightarrow0$.  For example, this approach has been used to determine the asymptotic behavior of partial theta functions $\sum_{n \geq 0} (-1)^n q^{an^2 + bn}$ as $q \to 1^-$ in \cite{KK,Wright71}.

In consideration of Theorem \ref{Corollary:CorToIngham}, the immediate
question is to what extent do we also have expansions of this form when $f$ is a function of a complex variable.
To be precise, we  say that a function $f$ is of {\it sufficient decay} in a domain $D\subset\C$ if there exists some $\varepsilon > 0$
such that $f(w) \ll w^{-1-\varepsilon}$ as $|w| \to \infty$ in $D$.
Our first result shows that Euler-Maclaurin summation gives an asymptotic expansion that converges {uniformly} on domains that preclude a tangential approach to $0$.

\begin{theorem}\label{Theorem:EulerMaclaurin1DShifted}
Suppose that $0\le \theta < \frac{\pi}{2}$ and let
$D_\theta := \{ re^{i\alpha} : r\ge0 \mbox{ and } |\alpha|\le \theta  \}$.
Let $f:\C\rightarrow\C$ be holomorphic in a domain containing
$D_\theta$, so that in particular $f$ is holomorphic at the origin, and
assume that $f$ and all of its derivatives are of sufficient decay.
Then for $a\in\mathbb{R}$ and $N\in\N_0$,
\begin{gather*}
\sum_{m\geq0}f(w(m+a))
=
	\frac{1}{w}\int_0^\infty f(x) dx
	-
	\sum_{n=0}^{N-1} \frac{B_{n+1}(a) f^{(n)}(0)}{(n+1)!}w^n
	+
	O_N\left(w^N\right)
,
\end{gather*}
uniformly, as $w\rightarrow0$ in $D_\theta$.
\end{theorem}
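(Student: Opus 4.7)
The plan is to emulate Zagier's derivation of \eqref{Eq:ZagierEM1} by treating $w$ as a parameter and reducing to the classical real-variable Euler-Maclaurin formula applied to the smooth function $g_w:[0,\infty)\to\C$, $g_w(x) := f(w(x+a))$, whose derivatives are $g_w^{(n)}(x) = w^n f^{(n)}(w(x+a))$. Fixing $w \in D_\theta \setminus \{0\}$, applying Euler-Maclaurin on $[0,M]$ at order $2K$ and letting $M \to \infty$---permitted because the sufficient decay of $f$ and its derivatives throughout $D_\theta$ forces the boundary terms at $M$ to vanish and makes the remainder integral absolutely convergent---one obtains
\begin{equation*}
\sum_{m \geq 0} f(w(m+a)) = \int_0^\infty f(w(x+a))\,dx + \tfrac{1}{2} f(wa) - \sum_{n=1}^{K-1} \frac{B_{2n}}{(2n)!}\, w^{2n-1} f^{(2n-1)}(wa) + R_K(w),
\end{equation*}
with $R_K(w) = \frac{w^{2K}}{(2K)!} \int_0^\infty f^{(2K)}(w(x+a)) \bigl(B_{2K} - B_{2K}(x - \lfloor x \rfloor)\bigr)\, dx$.

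The integral on the right is then rewritten via the substitution $u = w(x+a)$ followed by a contour shift: since $f$ is holomorphic on a neighborhood of the closed sector bounded by the positive real axis and the ray $\arg u = \arg w$, and since $|f|$ decays sufficiently at infinity there, Cauchy's theorem yields
\begin{equation*}
\int_0^\infty f(w(x+a))\,dx = \frac{1}{w} \int_0^\infty f(u)\,du - \frac{1}{w} \int_0^{wa} f(u)\,du,
\end{equation*}
where the last integral runs along the straight segment from $0$ to $wa$ inside $D_\theta$. Taylor-expanding this segment integral, together with $f(wa)$ and each $f^{(2n-1)}(wa)$, about $0$ to order $N$ and collecting like powers of $w$ produces the stated expansion; the resulting coefficient of $w^n f^{(n)}(0)$ is a rational combination that simplifies to $-B_{n+1}(a)/(n+1)!$ via the standard identity arising from $\frac{t e^{at}}{e^t - 1} = \sum_{n \geq 0} B_n(a)\frac{t^n}{n!}$, exactly as in Zagier's real-variable argument.

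For the uniform $O(w^N)$ bound, I would choose $K$ with $2K - 1 \geq N$ and control each error separately. The Taylor remainders for $f(wa)$ and $f^{(k)}(wa)$ on the segment $[0, wa] \subset D_\theta$ are $O(w^N)$ uniformly in $\arg w$, since the derivatives of $f$ are bounded on compact subsets of the domain of holomorphy. For $R_K(w)$, the substitution $u = w(x+a)$ combined with the bound $|f^{(2K)}(u)| \ll |u|^{-1-\varepsilon}$ valid throughout $D_\theta$ yields $|R_K(w)| \ll |w|^{2K-1} = O(w^N)$, with implied constant depending only on $\theta$ and $K$. The main obstacle is to make every estimate uniform as $\arg w$ ranges over $[-\theta,\theta]$; the restriction $\theta < \frac{\pi}{2}$ is essential and must be used actively, as it guarantees that the ray $\arg u = \arg w$ stays strictly inside the open right half-plane (so the contour deformation incurs no arc-at-infinity contribution), that $\RE(w)$ is uniformly comparable to $|w|$ on $D_\theta$ (so the decay constants for $f^{(k)}$ along the scaled ray are uniform), and that the segment $[0, wa]$ remains within the domain of holomorphy of $f$.
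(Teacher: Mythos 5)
Your proposal is correct and follows essentially the same route as the paper's proof: Euler--Maclaurin summation applied to $x\mapsto f(w(x+a))$ with $w$ as a parameter, a contour rotation (justified by holomorphy and sufficient decay in the sector) to replace $\int_0^{w\infty}f$ by $\int_0^\infty f$, Taylor expansion of the boundary data at $0$ recombined into $B_{n+1}(a)$ via the Bernoulli generating function, and remainder estimates made uniform in $\arg w$ through the integrability of $|f^{(k)}|$ along rays of $D_\theta$. The only difference is cosmetic: you invoke the classical even-order formula (choosing $2K-1\ge N$) where the paper rederives an order-$N$ identity with general Bernoulli polynomials and the periodic $\widetilde{B}_N$, but the coefficient bookkeeping and error analysis coincide.
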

\begin{remark}
We see in the proof of Theorem \ref{Theorem:EulerMaclaurin1DShifted} that the decay assumption can be slightly relaxed, as the primary technical condition is that $|f^{(n)}(w)|$ is integrable. However, in practice $f(w)$ often has much stronger decay (for example, $f(w) = g(w) e^{-w}$ for a rational function $g$).
\end{remark}

The next result extends Theorem \ref{Theorem:EulerMaclaurin1DShifted} to the case that the function has a simple pole at zero. In order to state it we  need the constants
\begin{gather*}
C_a :=  (1-a) \sum_{m \geq 0} \frac{1}{(m+a)(m+1)},
\end{gather*}
which are defined for
$a\in\mathbb{R}, a \not \in -\N_0$.
We note that
$C_a=-\gamma-\psi(a)$, where $\psi(a):= \frac{\Gamma'(a)}{\Gamma(a)}$
is the {\it digamma function} \cite[equation 6.3.16]{AS64},  and $\gamma$ is the Euler-Mascheroni constant.

\begin{theorem}
\label{Prop:EulerMaclaurin1DPoleShifted}
Suppose that $0\le \theta < \frac{\pi}{2}$, let $f:\C\rightarrow\C$ be holomorphic in a domain containing
$D_\theta$, except for a simple pole at the origin,
and assume that $f$ and all of its derivatives are of sufficient decay in $D_\theta$.
If $f(w) = \sum_{n\geq -1} b_{n}w^n$ near $0$, then for $a\in\mathbb{R}, a \not \in -\N_0$,
and $N\in\N_0$, then uniformly, as $w\rightarrow0$ in $D_\theta$,
\begin{multline*}
\sum_{m\geq0} f(w(m+a))
=
	\frac{b_{-1}\Log\left(\frac{1}{w}\right)}{w}
	+
	\frac{b_{-1}C_a}{w}
	+
	\frac{1}{w}\int_0^\infty \left( f(x) - \frac{b_{-1}e^{-x}}{x}\right) dx
\\
-
	\sum_{n=0}^{N-1} \frac{B_{n+1}(a) b_n}{n+1}w^n +
	O_N\left(w^N\right)
.
\end{multline*}
\end{theorem}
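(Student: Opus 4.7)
The plan is to reduce to Theorem \ref{Theorem:EulerMaclaurin1DShifted} by subtracting off a model function with the same polar behavior. Specifically, I would set
\[
g(w) := f(w) - \frac{b_{-1} e^{-w}}{w}.
\]
Since $\frac{e^{-w}}{w} = \frac{1}{w} + \sum_{n \geq 0}\frac{(-1)^{n+1}}{(n+1)!}w^n$, the poles cancel and $g$ is holomorphic on a domain containing $D_\theta$, with Taylor coefficients $\frac{g^{(n)}(0)}{n!} = b_n + \frac{(-1)^n b_{-1}}{(n+1)!}$. Moreover $g$ and all its derivatives inherit sufficient decay in $D_\theta$ from $f$ and from $e^{-w}/w$. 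Theorem \ref{Theorem:EulerMaclaurin1DShifted} then applies to $g$ and yields, uniformly on $D_\theta$,
\[
\sum_{m \geq 0} g(w(m+a)) = \frac{1}{w}\int_0^\infty g(x)\,dx - \sum_{n=0}^{N-1}\frac{B_{n+1}(a) g^{(n)}(0)}{(n+1)!}w^n + O_N(w^N).
\]

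Next I would analyze the model sum $h_a(w) := \sum_{m \geq 0} \frac{e^{-w(m+a)}}{m+a}$, so that $b_{-1}\sum_{m\geq 0}\frac{e^{-w(m+a)}}{w(m+a)} = \frac{b_{-1}}{w}h_a(w)$. Term-by-term differentiation (valid for $\mathrm{Re}(w)>0$) gives $h_a'(w) = -\frac{e^{(1-a)w}}{e^w-1}$, and the Bernoulli generating function $\frac{te^{bt}}{e^t-1} = \sum_{n \geq 0} B_n(b)\frac{t^n}{n!}$ with $b = 1-a$, together with $B_n(1-a) = (-1)^n B_n(a)$, rewrites this as
\[
h_a'(w) = -\frac{1}{w} + \sum_{n \geq 0}\frac{(-1)^n B_{n+1}(a)}{(n+1)!}w^n,
\]
with the right-hand series converging on $|w|<2\pi$. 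Integrating along a path in the right half-plane produces
\[
h_a(w) = \Log\!\left(\tfrac{1}{w}\right) + C + \sum_{n \geq 0}\frac{(-1)^n B_{n+1}(a)}{(n+1)(n+1)!}w^{n+1},
\]
and the integration constant is identified as $C = C_a$ by taking $w \to 0^+$ along the reals: from $\sum_{m\geq 0}\frac{e^{-(m+1)w}}{m+1} = -\log(1-e^{-w}) = \Log(\tfrac{1}{w}) + O(w)$ one finds $h_a(w) - \Log(\tfrac{1}{w}) \to \sum_{m\geq 0}\bigl(\tfrac{1}{m+a} - \tfrac{1}{m+1}\bigr) = C_a$.

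Finally, I would combine the two pieces via $\sum_m f(w(m+a)) = \sum_m g(w(m+a)) + \frac{b_{-1}}{w}h_a(w)$. The $\frac{b_{-1}\Log(1/w)}{w}$, $\frac{b_{-1}C_a}{w}$, and $\frac{1}{w}\int_0^\infty g(x)dx$ terms appear directly, and matching coefficients of $w^n$ for $0 \leq n \leq N-1$ reduces to the identity
\[
-\frac{B_{n+1}(a)}{(n+1)!}g^{(n)}(0) + \frac{(-1)^n b_{-1} B_{n+1}(a)}{(n+1)(n+1)!} = -\frac{B_{n+1}(a) b_n}{n+1},
\]
which is immediate from the formula for $g^{(n)}(0)$. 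The tail of the model series contributes $O(w^N)$ uniformly since the series has radius of convergence $2\pi$.

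The main obstacle is the second step: obtaining the precise expansion of the Lerch-type model sum $h_a(w)$ with the correct constant $C_a$, uniformly for $w \in D_\theta$. The analytic content is the identification of $C_a$ via the boundary limit, and the algebraic content is the Bernoulli symmetry $B_n(1-a) = (-1)^n B_n(a)$, which is what arranges the polynomial part of the expansion to cancel precisely against the extra contribution from the Taylor coefficients of $g$.
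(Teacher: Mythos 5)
Your proposal is correct and follows essentially the same route as the paper: subtract the model function $\frac{b_{-1}e^{-w}}{w}$, apply Theorem \ref{Theorem:EulerMaclaurin1DShifted} to the holomorphic remainder, and evaluate the remaining sum $\sum_{m\geq0}\frac{e^{-w(m+a)}}{m+a}$ by differentiating, invoking the Bernoulli generating function, and fixing the integration constant as $C_a$ via the limit $w\to0$. The only cosmetic difference is that you use the symmetry $B_n(1-a)=(-1)^nB_n(a)$ where the paper works directly with $\frac{e^{-wa}}{e^{-w}-1}$; the substance of the argument, including the identification of $C_a$ and the final coefficient cancellation, matches the paper's proof.
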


There are also applications where one needs asymptotic expansions of the form \eqref{Eq:ZagierEM1} for sums over multiple indices in $\N$ (e.g. \cite[Section 4]{BringmannJenningsShafferMahlburgRhoades1}). This requires a multi-dimensional generalization of
Theorem \ref{Theorem:EulerMaclaurin1DShifted}. While the two-dimensional version of this formula has appeared in a small number of recent articles, and the authors stated the general form in \cite{BringmannJenningsShafferMahlburgRhoades1}, we are not aware of any recorded proofs. Writing vectors in bold letters and their components with subscripts here and throughout the paper, we say that
a multivariable
function $f$ in $s$
variables is of {\it sufficient decay} in $D$ if there exist $\varepsilon_j>0$ such that $f(\b{x})\ll
(x_1+1)^{-1-\varepsilon_{1}}\dotsm(x_s+1)^{-1-\varepsilon_{s}}$
uniformly as $|x_1|+\dotsb+|x_s|\rightarrow\infty$ in
$D$.

\begin{theorem}\label{Theorem:EulerMaclaurinGeneral}
Suppose that $0 \leq \theta_j < \frac{\pi}{2}$ for $1 \leq j \leq s$,
and that $f:\C^s\rightarrow\C$ is holomorphic in a domain containing $D_{\b\theta}:=D_{\theta_1}\!\times\dotsb\times D_{\theta_s}$. If $f$ and all of its derivatives are of sufficient decay in $D_{\b\theta}$, then for $\b{a}\in\R^s$ and $N\in\N_0$ we have
\begin{align*}
&
\sum_{\b{m}\in \N_0^s } f( w(\b{m}+\b{a}))
=
	(-1)^s\sum_{ \b{n}\in \NN^s}	
	f^{(\b{n} )}( \b{0} )	
	\prod_{1\le j\le s} \frac{B_{n_j+1}(a_j)}{(n_j+1)!}w^{n_j} 	
	\\&\quad
	+
	\sum_{\emptyset\subseteq \mathscr{S} \subsetneq \{1,\dots,s\}}	 
	\frac{(-1)^{|\mathscr{S}|}}{w^{s-|\mathscr{S}|}}
	\sum_{\substack{ n_j\in \mathcal N_N\\ j\in\mathscr{S}}}
	\int_{[0,\infty)^{s-|\mathscr{S}|}}
	\left[
		\prod_{j\in\mathscr{S}}
		\frac{\partial^{n_j}}{\partial x_j^{n_j}}
		f(\b{x})
	\right]_{\substack{x_j=0\\ j\in\mathscr{S}}}
	\prod_{\substack{1\le k\le s\\[0.5ex] k\not\in\mathscr{S}}} dx_k
	\prod_{j\in\mathscr{S}} \frac{B_{n_j+1}(a_j)}{(n_j+1)!}w^{n_j}
	\\&\quad
	+O_N\left(w^N\right)
,
\end{align*}
uniformly, as $w\rightarrow0$ in $D_{\b\theta}$,
where $\mathcal N_N:=\{0,1,\dotsc,N-1\}$.
\end{theorem}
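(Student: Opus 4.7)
The plan is to proceed by induction on the dimension $s$, with Theorem \ref{Theorem:EulerMaclaurin1DShifted} as the base case. For the inductive step, I fix $m_1,\ldots,m_{s-1}$ and apply the one-dimensional Euler-Maclaurin expansion to the summation in the last variable $m_s$, treating the other arguments as parameters in $D_{\theta_1}\times\dotsb\times D_{\theta_{s-1}}$. Writing $\b{w}':=(w(m_1+a_1),\ldots,w(m_{s-1}+a_{s-1}))$, this produces
\begin{align*}
\sum_{m_s\ge 0} f\bigl(\b{w}',w(m_s+a_s)\bigr)
&= \frac{1}{w}\int_0^\infty f(\b{w}',x_s)\,dx_s\\
&\quad - \sum_{n_s=0}^{N-1} \frac{B_{n_s+1}(a_s)}{(n_s+1)!}\left[\tfrac{\partial^{n_s}}{\partial x_s^{n_s}} f(\b{w}',x_s)\right]_{x_s=0} w^{n_s} + \mathcal{E}_s(\b{w}'),
\end{align*}
where $\mathcal{E}_s(\b{w}')$ should be bounded by $w^N$ times a factor that decays in each $|m_j|$ rapidly enough to be summable, thanks to the sufficient decay of $f^{(N)}$ along the $s$th coordinate.

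Each surviving term on the right is a function of only $s-1$ complex variables to which the inductive hypothesis applies. The integral $g(y_1,\ldots,y_{s-1}):=\int_0^\infty f(y_1,\ldots,y_{s-1},x_s)\,dx_s$ is holomorphic on $D_{\theta_1}\times\dotsb\times D_{\theta_{s-1}}$ (differentiating under the integral is justified by the decay hypothesis on $f$ and all its derivatives), and inherits sufficient decay; the same is true of each boundary function $(\b{y})\mapsto[\partial_{x_s}^{n_s}f(\b{y},x_s)]_{x_s=0}$. Summing over $\b{m}'\in\N_0^{s-1}$ and invoking the inductive hypothesis on each of the finitely many resulting $(s-1)$-dimensional Euler-Maclaurin problems produces a collection of terms that must be identified with the asserted formula.

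The bookkeeping naturally reproduces the indexing by subsets $\mathscr{S}\subseteq\{1,\ldots,s\}$: at each step of the induction, variable $j$ is assigned to $\mathscr{S}$ precisely if it receives a boundary/derivative-at-$0$ contribution from the one-dimensional expansion, and to its complement if it yields an integral. Each $j\in\mathscr{S}$ contributes a factor of $-1$ from the sign in Theorem \ref{Theorem:EulerMaclaurin1DShifted}, combining to the overall sign $(-1)^{|\mathscr{S}|}$, while each $j\notin\mathscr{S}$ contributes one factor of $1/w$ from its integral, producing the prefactor $w^{-(s-|\mathscr{S}|)}$. The special case $\mathscr{S}=\{1,\ldots,s\}$ is exactly the purely-derivative term with sign $(-1)^s$ that is displayed separately in the theorem.

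The main technical obstacle is controlling the cumulative error uniformly on $D_{\b{\theta}}$. After a single application of Theorem \ref{Theorem:EulerMaclaurin1DShifted}, the error must be not merely $O_N(w^N)$ but also equipped with enough decay in the remaining variables for the subsequent sum to converge; this requires inspecting the integral-remainder form of the one-dimensional error and confirming that its integrand inherits the sufficient decay of $f^{(N)}$ in all of the remaining coordinates, so that summing over $\b{m}'$ produces a convergent bound of the form $O_N(w^N)$. A secondary technical point is the interchange of the outer sum over $\b{m}'$ with the $x_s$ integral defining $g$, which is justified by dominated convergence using the sufficient decay of $f$. Once these uniformity and interchange issues are handled, the combinatorial identification described above completes the argument.
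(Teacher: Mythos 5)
Your strategy is essentially sound and shares its core with the paper's argument: an induction that bottoms out in Theorem \ref{Theorem:EulerMaclaurin1DShifted}, applied in a single variable with the remaining arguments frozen at the complex points $w(m_j+a_j)$, together with the key observation that the one-dimensional remainder inherits the decay of $f$ in the frozen variables and can therefore be summed. The organization, however, is genuinely different. The paper fixes $s$ and proves a strengthened intermediate statement (Proposition \ref{PropositionEulerMaclaurinWithError}) by induction on the number $r$ of variables converted, phrased from the integral side (expressing $w^{-r}\int_{[0,\infty)^r}f\,dx_1\dotsm dx_r$ in terms of the $r$-fold sum) and carrying an explicit remainder function $g_{r,w}(\b{x}_{r+1,s})$ that is required to be uniformly bounded and integrable in the remaining real variables; its induction step applies the hypothesis twice, once after integrating in the new variable and once more to re-expand the boundary sums $\sum_{\b{m}}f^{(\b{0},n_r,\b{0})}(w(\b{m}+\b{a}),0,\dotsc)$. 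Your version, inducting on $s$ from the sum side and applying the inductive hypothesis to the $(s-1)$-variable functions $g(\b{y})=\int_0^\infty f(\b{y},x_s)\,dx_s$ and $\b{y}\mapsto[\partial_{x_s}^{n_s}f(\b{y},x_s)]_{x_s=0}$, avoids the double application, at the cost of the (routine) verification that these functions are holomorphic and of sufficient decay; what the paper's formulation buys is precisely the explicit bookkeeping of the remainder that your sketch leaves implicit. On that point your write-up is too optimistic: summing the one-dimensional error over $\b{m}'$ costs one power of $w$ per frozen variable, since $\sum_{m_j\ge0}(|w|(m_j+a_j)+1)^{-1-\varepsilon_j}\asymp|w|^{-1}$, and likewise the prefactor $\frac1w$ in front of $\sum_{\b{m}'}g(w(\b{m}'+\b{a}'))$ multiplies the inductive error, so your induction actually yields a remainder of size $O_N\bigl(w^{N-s}\bigr)$ rather than $O_N\bigl(w^N\bigr)$. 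This is not a defect of substance, because $N$ is arbitrary (run the argument with $N$ replaced by $N+s$), and the paper's own Proposition \ref{PropositionEulerMaclaurinWithError} carries exactly the analogous $w^{N-r}g_{r,w}$ loss; but the error accounting should be stated accurately rather than asserting that the summed error is already $O_N(w^N)$.
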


We are writing Theorem \ref{Theorem:EulerMaclaurinGeneral} in a compact form,
so to illustrate the unpacked statement we note that the
two-dimensional case is
\begin{align*}
\sum_{\b{m}\in\N_0^2} f(w(\b{m}+\b{a}))
&=
	\frac{1}{w^2}\int_{0}^{\infty}\!\!\int_{0}^{\infty}\! f(\b{x}) dx_1dx_2
	-
	\frac{1}{w}\sum_{n_1=0}^{N-1}\frac{ B_{n_1+1}(a_1) }{(n_1+1)!} w^{n_1}
		\int_{0}^{\infty}\!  f^{(n_1,0)} (0,x_2)dx_2
	\\&\quad
	-
	\frac{1}{w}
	\sum_{n_2=0}^{N-1} \frac{B_{n_2+1}(a_2) }{(n_2+1)!}w^{n_2}
	\int_{0}^{\infty}\! f^{(0,n_2)}(x_1,0)dx_1
	\\&\quad
	+
	\!\sum_{n_1+n_2<N}\!\!
	 \frac{B_{n_1+1}(a_1) B_{n_2+1}(a_2) f^{(n_1,n_2)}(\b{0})}{(n_1+1)!(n_2+1)!}w^{n_1+n_2}
	+
	O_N\left(w^N\right)
.
\end{align*}

The remainder of this article is organized as follows.
In the following section we recall known facts for Bernoulli polynomials.
In Section \ref{S:Examples}, we give examples of a few applications related to
Theorems \ref{Corollary:CorToIngham} and \ref{Theorem:EulerMaclaurin1DShifted}.
In particular, we demonstrate why the additional growth constraint
in the right half-plane is necessary for Theorem \ref{Corollary:CorToIngham} and why
the limit in Theorem \ref{Theorem:EulerMaclaurin1DShifted} must be taken non-tangentially
to the imaginary axis.
In Section \ref{S:Ingham}, we state Ingham's Tauberian theorem and use it to prove Theorem
\ref{Corollary:CorToIngham}. In Section \ref{S:Euler-Mac} we extend the classical use of Euler-Maclaurin summation to complex functions, proving Theorems \ref{Theorem:EulerMaclaurin1DShifted}, \ref{Prop:EulerMaclaurin1DPoleShifted},
and \ref{Theorem:EulerMaclaurinGeneral}.
We conclude in Section \ref{S:Conclusion} with a brief discussion comparing Ingham's Tauberian theorem and Wright's Circle Method.

\section*{Acknowledgments}
The authors thank Karl Dilcher for bringing Euler-Boole summation to our attention. Moreover we thank the referee for useful comments on an earlier version of this paper.

\section{Preliminaries}
\label{S:Prelim}


We begin by recalling basic properties of the \textit{Bernoulli polynomials} (see \cite[Section 23.1]{AS64}), which have the exponential generating function
\begin{equation*}
\sum_{n \geq 0} B_n(x) \frac{t^n}{n!} = \frac{t e^{tx}}{e^t - 1}.
\end{equation*}
For $n \in\N_0\!\setminus\!\{1\}$, the Bernoulli numbers are defined by
\begin{equation}
\label{E:BnDef}
B_n := B_n(1) = B_n(0),
\end{equation}
whereas in order to avoid confusion for $n=1$,
we typically simply directly plug in
\begin{equation}
\label{E:B1}
B_1(1)=\frac12=-B_1(0).
\end{equation}
The polynomials satisfy many useful identities, including
\begin{align}
\label{E:B'}
B^\prime_{n+1}(x) & =(n+1)B_n(x), \quad \text{and}\\
\label{E:Bkx+y}
B_{k}(x+y) &= \sum_{n=0}^k \binom{k}{n} B_{n}(x)y^{k-n}.
\end{align}

We also need the \textit{Euler polynomials}, which have the generating function
\begin{equation}\label{Euler}
\sum_{n \geq 0} E_n(x) \frac{t^n}{n!} = \frac{2 e^{tx}}{e^t + 1}.
\end{equation}
These are related to the Bernoulli polynomials by the identity
\begin{equation}
\label{E:Bn=En}
B_{n+1}\left(\frac{x}{2}\right) - B_{n+1}\left(\frac{x}{2} + \frac{1}{2}\right)
= -\frac{(n+1)}{2^{n+1}}E_{n}(x).
\end{equation}

\section{Examples}\label{S:Examples}
In this section we consider some applications for Theorems \ref{Corollary:CorToIngham}
and \ref{Theorem:EulerMaclaurin1DShifted}. In these examples we carefully
examine the technical issues that arise in applying and using these theorems.

\subsection{Partitions and weakly holomorphic modular forms}\label{Example:Partitions}
In order to illustrate the use of Theorem \ref{Corollary:CorToIngham}, we first revisit one of the motivating examples in \cite{Ingham1}. Euler's partition generating function is
\begin{gather*}
P(q) := \sum_{n\ge0} p(n)q^n = \frac{q^{\frac{1}{24}}}{\eta(\tau)},
\end{gather*}
where $\eta(\tau):=q^{\frac1{24}}\prod_{n\geq 1}(1-q^n)$ is \emph{Dedekind's $\eta$-function}. Here, and in the other examples, $q$ and $\tau$ are related by $q:=e^{2\pi i\tau}$.
The Dedekind $\eta$-function satisfies the modular transformation \cite[Theorem 3.1]{Apo90}
\begin{gather*}
\eta\left(-\frac{1}{\tau}\right)
=
\sqrt{-i\tau} \eta(\tau),
\end{gather*}
which implies that for $z\in\C$ with $\RE(z) > 0$,
\begin{gather*}
P(e^{-z})
=
	\sqrt{\frac{z}{2\pi}}
	e^{-\frac{z}{24} + \frac{\pi^2}{6z}  }
	\sum_{n\ge0} p(n)e^{-\frac{4\pi^2 n}{z}}
=
	\sqrt{\frac{z}{2\pi}}
	e^{\frac{\pi^2}{6z}}\left(1 + O\left(\left|e^{-\frac{4 \pi^2}{z}}\right|\right)\right).
\end{gather*}
We now easily see that Theorem \ref{Corollary:CorToIngham} can be applied, since if $z=x+iy$ $(x>0)$ with $|y|\le \Delta x$, then
\begin{equation}
\label{E:PExpn}
\left| e^{-\frac{1}{z}} \right|
=
	e^{-{\frac{x}{x^2+y^2}}}
	\le
	e^{-{\frac{1}{\left(1+\Delta^2\right)x}}}
	\le
	e^{-{\frac{1}{\left(1+\Delta^2\right)|z|}}}
.
\end{equation}
Thus in these regions of restricted angle, we have (see \eqref{E:Buniform})
\begin{equation}
\label{E:PAsymp}
P(e^{-z})
\sim
	\sqrt{\frac{z}{2\pi}}
	e^{\frac{\pi^2}{6z}  }	
,\qquad\qquad\qquad
\mbox{as }
z\rightarrow 0.
\end{equation}
And indeed, Theorem \ref{Corollary:CorToIngham} does give the correct asymptotic formula, as \eqref{asb} implies \eqref{Eq:PartitionAsymptotic}.

Finally, we also note that \eqref{E:PAsymp} does not hold in the whole right half-plane $\RE(z) > 0$.
For example, if $z$ approaches $0$ tangentially along the path $z=x+ix^{\frac13}$, then
\begin{gather*}
\exp\left( -\frac{1}{z} \right)
=
	\exp\left( -\frac{x}{x^2+x^{\frac23}} +\frac{ix^{\frac13}}{x^2+x^{\frac23}} \right)
,
\end{gather*}
and
\begin{gather}\label{Eq:TangentialPath}
\frac{x}{x^2+x^{\frac23}} \rightarrow 0
,\qquad\qquad
\frac{x^{\frac13}}{x^2+x^{\frac23}}
\rightarrow \infty,
\end{gather}
as $x\rightarrow0^+$. Thus $|e^{-\frac1z}|\rightarrow1$
and we can no longer isolate the leading asymptotic term in \eqref{E:PExpn}.
This can also be seen numerically. In Table \ref{Table:Errors1},
we give a numerical approximation of the size of
$P(e^{-z}) \sqrt{\frac{2\pi}{z}}e^{-\frac{\pi^2}{6z}}-1$
along three different paths, with the first two being non-tangential and the third being tangential.
As expected, the error tends to zero for the first two paths, but not the third.

The principle is similar for any other modular form, as the modular inversion map $\tau\mapsto -\frac{1}{\tau}$
always gives an expansion in terms of $e^{-cz}$ for some $c>0$, which rapidly
tends to $0$ so long as the angle of $z$ is restricted.

\begin{center}
\begin{table}[h]
\caption{Approximate size of the error, $P(e^{-z})\sqrt{\frac{2\pi}{z}}e^{-\frac{\pi^2}{6z}}-1$, along three paths}
\begin{tabular}{c || r|r|r}\label{Table:Errors1}
\diagbox[height=0.75cm, width=2cm]{$\hspace{8pt}x$}{$\mathrm{path}\hspace{-4pt}$}
& $z=x+ix$ & $z=x+ix^2$ & $z=x+ix^{\frac13}$
\\
\hline \hline &&&
\\[-2ex]
$10^{-1}$ 	& 0.0058802931 	& 0.0041787363  & 0.0197422414 \\
$10^{-2}$ 	& 0.0005891329	& 0.0004166007  & 0.0088566903 \\
$10^{-3}$ 	& 0.0000589243	& 0.0000416658  & 0.0234673077\\
$10^{-4}$ 	& 0.0000058925	& 0.0000041666  & 0.1284298533\\
$10^{-5}$ 	& 0.0000005892	& 0.0000004166  & 0.2648476442
\end{tabular}
\end{table}
\end{center}

\subsection{A counterexample for the real-analytic version of Ingham's theorem.}
\label{S:Examples:Counterex}

In this section we give a detailed presentation of an example that demonstrates the necessity of the second condition in \eqref{additional}.

\subsubsection{General discussion}

The importance of the example under consideration was highlighted by Ingham, who stated in note 2) on page 1088 of \cite{Ingham1} that ``In Theorem $1'$ we may regard\dots (ii) (for every $\Delta$) as Tauberian conditions which convert the generally false inference\dots into a true proposition. An example\dots has been constructed by Avakumovi\'c and Karamata (353, e).''

More specifically, we  work with the special case $\gamma = \frac32$ of Avakumovi\'c and Karamata's example e) \cite{AK36} (after making some minor modifications in order to obtain a power series instead of the continuous Laplace transform used in their original construction).

We define the coefficients
$$
A(n) := \begin{cases} 0 & \text{if } n = 0, \\
e^{2 m^{\frac32}} m^{-\frac14} \qquad & \text{if } m^{3} \leq n < (m+1)^{3},
\end{cases}
$$
and the corresponding series $F(q) := \sum_{n \geq 0} A(n) q^n$.
\begin{proposition}
\label{P:AK}
As $t \to 0^+$,
\begin{equation*}
F\left(e^{-t}\right) \sim \frac{2\sqrt{\pi}}{3} \frac{e^{\frac1{t}}}{t},
\end{equation*}
and
\begin{align}
\label{E:Anlimsup}
\limsup_{n \to \infty} n^{\frac{1}{12}} e^{-2 \sqrt{n}} A(n) & = 1, \\
\label{E:Anliminf}
\liminf_{n \to \infty} n^{\frac{1}{12}} e^{-2 \sqrt{n} + 3 n^{\frac16}} A(n) & = 1.
\end{align}
\end{proposition}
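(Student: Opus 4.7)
My plan is to treat the three assertions separately. The asymptotic for $F(e^{-t})$ is the analytically substantive claim and comes from Laplace's method, while the limsup and liminf statements reduce to evaluating the given expressions at the endpoints of each block. For the first claim, I would collect the geometric sum within each block to write
\begin{equation*}
F(e^{-t}) = \frac{1}{1-e^{-t}} \sum_{m \geq 1} e^{2m^{3/2}} m^{-1/4}\left(e^{-m^3 t} - e^{-(m+1)^3 t}\right).
\end{equation*}
Since $1 - e^{-t} \sim t$, it suffices to show the sum above is asymptotic to $\tfrac{2\sqrt{\pi}}{3}\, e^{1/t}$. I would apply Laplace's method to the exponent $\phi(m) := 2m^{3/2} - m^3 t$: one computes that $\phi$ attains its maximum at the saddle $m^* = t^{-2/3}$, with $\phi(m^*) = t^{-1}$ and $\phi''(m^*) = -\tfrac{9}{2} t^{1/3}$. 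Near $m^*$, the term $e^{-(m+1)^3 t}$ is negligible because $((m+1)^3 - m^3)\, t \sim 3 t^{-1/3} \to \infty$, and the slowly varying factor $m^{-1/4}$ may be replaced by $(m^*)^{-1/4} = t^{1/6}$. Approximating the sum by the usual Gaussian integral then gives
\begin{equation*}
e^{\phi(m^*)} (m^*)^{-1/4} \sqrt{\frac{2\pi}{|\phi''(m^*)|}} = e^{1/t}\cdot t^{1/6} \cdot \frac{2\sqrt{\pi}}{3}\, t^{-1/6} = \frac{2\sqrt{\pi}}{3}\, e^{1/t},
\end{equation*}
which yields the claimed asymptotic after dividing by $1 - e^{-t}$.

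For the limsup and liminf, I fix $m^3 \leq n < (m+1)^3$, so that $A(n) = e^{2m^{3/2}} m^{-1/4}$. Since $\sqrt{n} \geq m^{3/2}$ and $n < (m+1)^3$, I obtain
\begin{equation*}
n^{1/12} e^{-2\sqrt{n}} A(n) \leq (m+1)^{1/4} m^{-1/4} \longrightarrow 1,
\end{equation*}
with equality along the subsequence $n = m^3$, so \eqref{E:Anlimsup} follows. For \eqref{E:Anliminf}, set $g(n) := n^{1/12} e^{-2\sqrt{n}+3n^{1/6}}$; a short derivative calculation shows $g$ is eventually decreasing, so the minimum of $g(n) A(n)$ over each block occurs at $n = (m+1)^3 - 1$. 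Expanding $(m+1)^{3/2}$ and $(m+1)^{1/2}$ in inverse powers of $m$ yields
\begin{equation*}
-2(m+1)^{3/2} + 3(m+1)^{1/2} + 2 m^{3/2} = O\!\left(m^{-1/2}\right),
\end{equation*}
and combined with $(m+1)^{1/4} m^{-1/4} \to 1$ this gives $g((m+1)^3 - 1)\, A(m^3) \to 1$, while the monotonicity of $g$ provides the matching uniform lower bound over each block, giving \eqref{E:Anliminf}.

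The main obstacle is making the Laplace approximation rigorous. Because the saddle location $t^{-2/3}$ grows much faster than the Gaussian width $t^{-1/6}$, the local quadratic approximation to $\phi$ and the replacement $m^{-1/4} \sim (m^*)^{-1/4}$ are valid on a sufficiently wide neighborhood of $m^*$; the real bookkeeping lies in controlling the tails of the sum far from $m^*$ and in the Euler--Maclaurin comparison between the discrete sum and the continuous integral, both of which are routine for a saddle-point argument of this form. By contrast, the limsup and liminf statements are essentially Taylor expansions of $(m+1)^{3/2}$ once one exploits the block structure of $A$.
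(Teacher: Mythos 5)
Your proposal is correct, and for the $\limsup$/$\liminf$ statements it is essentially identical to the paper's argument: exploit that $A$ is constant on each block $[m^3,(m+1)^3)$, locate the extrema at the block endpoints (left endpoint for the maximum, right endpoint for the minimum, using monotonicity of the weight), and Taylor-expand $(m+1)^{3/2}$ and $(m+1)^{1/2}$ to see the exponent $-2(m+1)^{3/2}+3(m+1)^{1/2}+2m^{3/2}=O(m^{-1/2})$ vanishes in the limit. The decomposition of $F(e^{-t})$ via the geometric sum over each block and the factor $\frac{1}{1-e^{-t}}\sim\frac1t$ is also the same. Where you diverge is in evaluating the main sum $\sum_m m^{-1/4}e^{2m^{3/2}-m^3t}$: you run a generic Laplace/saddle-point argument, expanding $\phi(m)=2m^{3/2}-m^3t$ quadratically about $m^*=t^{-2/3}$ (your saddle data $\phi(m^*)=1/t$, $\phi''(m^*)=-\tfrac92 t^{1/3}$ and the resulting constant $\tfrac{2\sqrt\pi}{3}$ all check out), whereas the paper completes the square exactly, writing $e^{2m^{3/2}-m^3t}=e^{1/t}e^{-t(m^{3/2}-1/t)^2}$, proves unimodality of the summand, compares sum and integral by the monotone comparison test, and evaluates the integral by the substitution $w=\sqrt t\,(u^{3/2}-1/t)$. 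The paper's route buys exactness: the Gaussian structure is not an approximation, so no control of cubic corrections is needed, and the sum-versus-integral error is bounded by a single term $g(x_2)=o(1)$. Your route is more standard but does require the deferred bookkeeping: the quadratic approximation rests on $|\phi'''|\,W^3\asymp tW^3\to0$ on a window $W$ somewhat larger than the width $t^{-1/6}$ (your stated justification, that $m^*$ grows faster than the width, is the right reason for freezing $m^{-1/4}$, but not by itself for the quadratic expansion), plus tail estimates and a discrete-to-continuous comparison; all of these do hold and are routine, as you say. Likewise, your observation that $e^{-(m+1)^3t}$ is suppressed by $e^{-3t^{-1/3}}$ near the saddle must be combined with a tail argument for $m$ far from $m^*$; the paper isolates this as a separate lemma with the cutoff $m=t^{-1/2-\varepsilon}$, which is the shape your "routine" step would need to take.
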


As a consequence, we see that Theorem \ref{Corollary:CorToIngham}
 is false in general if one only considers the asymptotic behavior of the series along the real line.
 In particular, the $A(n)$ are weakly increasing and $F(e^{-t})$ satisfies the first condition in \eqref{additional}. However, a short calculation shows that \eqref{asb} does not hold; otherwise, the conclusion would be that $A(n) \sim B(n)$, with
\begin{equation*}
B(n) := \frac{1}{3} n^{-\frac14} e^{2 \sqrt{n}}.
\end{equation*}
But \eqref{E:Anlimsup} and \eqref{E:Anliminf} show that this expression does not accurately describe the behavior of $A(n)$, either from above or below, as
\begin{equation*}
\limsup_{n \to \infty} \frac{A(n)}{B(n)}
= 3 \limsup_{n \to \infty} n^{\frac16} = \infty, \qquad
\liminf_{n\to \infty} \frac{A(n)}{B(n)}
= 3 \liminf_{n \to \infty} n^{\frac16} e^{- 3 n^{\frac16}} = 0.
\end{equation*}

\subsubsection{Proof of Proposition \ref{P:AK}}

We first verify the asymptotic formulas for the coefficients. By construction, if $s(n)$ is an increasing sequence, then any maxima of $\frac{A(n)}{s(n)}$ occur at the values $n = m^3$, thus
\begin{align*}
\limsup_{n \to \infty} n^{\frac{1}{12}} e^{-2 \sqrt{n}} A(n)
= \limsup_{m \to \infty} m^{\frac14} e^{-2 \sqrt{m^3}} A\left(m^3\right) = 1.
\end{align*}
This proves \eqref{E:Anlimsup}.

Similarly, the minima of $\frac{A(n)}{B(n)}$ occur at $n = (m+1)^3-1$. To simplify the calculations, note that the expression $n^{\frac{1}{12}}e^{-2 \sqrt{n} + 3 n^{\frac16} }$ is asymptotically equal to the same expression with $n \mapsto n-1.$ We can therefore plug in $(m+1)^3$ instead of $(m+1)^3-1$. Thus
\begin{align}
\notag
\liminf_{n \to \infty} e^{-2 \sqrt{n} + 3 n^{\frac16}} n^{\frac{1}{12}} A(n)
& = \liminf_{m \to \infty} e^{-2(m+1)^{\frac32} + 3 (m+1)^{\frac12}} (m+1)^{\frac14} A\left(m^3\right) \\ \notag
& = \liminf_{m \to \infty} e^{-2(m+1)^{\frac32} + 3 (m+1)^{\frac12}} (m+1)^{\frac14}  e^{2m^{\frac32}} m^{-\frac14}.
\end{align}
As $m \to \infty$, we have that $(m+1)^{\frac14} m^{-\frac14} \to 1$. The exponential term has the overall exponent
\begin{equation*}
-2(m+1)^{\frac32}  + 3(m+1)^{\frac12} + 2m^{\frac32}
 = O\left(m^{-\frac12}\right),
\end{equation*}
which goes to 0 as $m\to \infty$. This proves \eqref{E:Anliminf}.

It is more involved to determine the asymptotic behavior of $F$. By definition,
\begin{equation} \label{E:f2sums}
F\left(e^{-t}\right)
= \sum_{m \geq 1} e^{2m^{\frac32}} m^{-\frac14} \sum_{n=m^3}^{(m+1)^3-1} e^{-nt}
= \frac{1}{1-e^{-t}} \sum_{m \geq 1} e^{2m^{\frac32}} m^{-\frac14} \left(e^{-m^3 t} - e^{-(m+1)^3 t}\right).
\end{equation}

We see below that the final exponential term is asymptotically negligible, and we next show that when this final term is removed, the resulting sum indeed gives the claimed asymptotic formula.
\begin{proposition}
\label{P:gsumAsymp}
As $t \to 0^+$, we have
\begin{equation*}
\sum_{m \geq 1} m^{-\frac14}  e^{2m^{\frac32}-m^3 t}
\sim e^{t^{-1}}\int_1^\infty u^{-\frac14} e^{-t \left(u^{\frac32}- \frac1{t}\right)^2} du
\sim \frac{2\sqrt{\pi}}{3} e^{\frac1{t}}.
\end{equation*}
\end{proposition}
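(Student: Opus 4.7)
The starting point is the completed-square identity $2m^{\frac32} - m^3 t = \frac{1}{t} - t(m^{\frac32} - \frac{1}{t})^2$, which isolates the exponential growth $e^{1/t}$ and converts the sum into a Gaussian-concentrated form. Setting $g_t(u) := u^{-\frac14} e^{-t(u^{\frac32} - \frac{1}{t})^2}$, the left-hand side becomes $e^{1/t}\sum_{m \geq 1} g_t(m)$, and the function $g_t$ has its peak near $u = t^{-2/3}$ with height of order $t^{1/6}$. The proposition thus reduces to showing that
\[
\sum_{m \geq 1} g_t(m) \sim \int_1^\infty g_t(u)\, du \sim \frac{2\sqrt{\pi}}{3} \qquad \text{as } t \to 0^+.
\]

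For the first equivalence, I would use the elementary bound
\[
\left|\sum_{m \geq 1} g_t(m) - \int_1^\infty g_t(u)\, du\right| \leq \int_1^\infty |g_t'(u)|\, du,
\]
obtained by applying the fundamental theorem of calculus to $g_t(m)-g_t(u)$ on each interval $[m,m+1]$. A direct computation gives $g_t'(u) = e^{-t(u^{3/2}-1/t)^2}\bigl[-\tfrac14 u^{-5/4} - 3t u^{1/4}(u^{3/2}-1/t)\bigr]$, and after the substitutions $v = u^{3/2}$ followed by $w = \sqrt{t}(v-1/t)$, the Gaussian factor $e^{-w^2}$ concentrates each of the two resulting integrals near $w=0$, yielding $\int_1^\infty |g_t'(u)|\, du = O(t^{1/6}) = o(1)$.

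For the second equivalence, the same substitution chain applied directly to $\int_1^\infty g_t(u)\, du$ transforms it into
\[
\int_1^\infty u^{-\frac14} e^{-t(u^{\frac32}-\frac{1}{t})^2}\, du = \frac{2}{3}\int_{\sqrt{t}-1/\sqrt{t}}^\infty \frac{e^{-w^2}}{\sqrt{w\sqrt{t}+1}}\, dw.
\]
As $t \to 0^+$, the lower limit tends to $-\infty$ and the integrand tends pointwise to $e^{-w^2}$, so the expected limit is $\frac{2}{3}\int_{-\infty}^\infty e^{-w^2}\, dw = \frac{2\sqrt{\pi}}{3}$.

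The main technical obstacle will be justifying this last limit by dominated convergence, since $(w\sqrt{t}+1)^{-1/2}$ is singular at the lower endpoint $w=\sqrt{t}-1/\sqrt{t}$ (where $w\sqrt{t}+1 = t$), so no $t$-independent dominating function controls the integrand uniformly. I would handle this by splitting the integral at $w = -1/(2\sqrt{t})$: on $[-1/(2\sqrt{t}),\infty)$ one has $w\sqrt{t}+1 \geq 1/2$, so $(w\sqrt{t}+1)^{-1/2} \leq \sqrt{2}$ and $\sqrt{2}\,e^{-w^2}$ serves as a dominating function; on the remaining left tail the interval has length $O(t^{-1/2})$, the singular factor is $O(t^{-1/2})$, and the Gaussian weight is $\leq e^{-1/(4t)}$, producing a total contribution of $O(t^{-1}e^{-1/(4t)}) = o(1)$.
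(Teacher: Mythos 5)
Your proposal is correct, and it diverges from the paper's argument in one substantive step: the comparison of the sum with the integral. The paper analyzes the summand $g(x)=x^{-\frac16}e^{-t(x-\frac1t)^2}$ (so that your $g_t(u)$ equals $g(u^{\frac32})$), locates its critical points $x_1\sim\frac1{12}$ and $x_2\sim\frac1t$, deduces that for small $t$ the terms increase up to $m_2=\lfloor x_2^{2/3}\rfloor$ and decrease afterwards, and then invokes the standard integral comparison for monotone functions to bound the discrepancy by $2g(x_2)=O\bigl(t^{\frac16}\bigr)$. You instead use the total-variation bound $\bigl|\sum_{m\ge1}g_t(m)-\int_1^\infty g_t(u)\,du\bigr|\le\int_1^\infty|g_t'(u)|\,du$ and estimate the derivative integral directly through the substitutions $v=u^{\frac32}$, $w=\sqrt t\,(v-\frac1t)$, again arriving at $O\bigl(t^{\frac16}\bigr)$; this is legitimate (the two estimates are cousins, since the total variation of a unimodal function is controlled by twice its peak value), and it buys you a proof that avoids the explicit critical-point computation, at the cost of having to control two singular integrals near the lower endpoint $w=\sqrt t-\frac1{\sqrt t}$ — a control you only sketch for the derivative terms, though the splitting you describe in your final paragraph applies verbatim there as well. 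For the evaluation $\int_1^\infty g_t(u)\,du\to\frac{2\sqrt\pi}{3}$ you follow the same substitution as the paper, and in fact you are more careful: the paper simply passes to the limit under the integral sign, whereas your split at $w=-\frac1{2\sqrt t}$, with the dominating function $\sqrt2\,e^{-w^2}$ on the right and the crude bound $O\bigl(t^{-1}e^{-\frac1{4t}}\bigr)$ on the left tail, supplies the dominated-convergence justification the paper leaves implicit.
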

\begin{proof}
We roughly follow the arguments on pages 354--355 of \cite{AK36}, with some additional details added. We begin by showing the final asymptotic equality, as it is useful throughout the rest of the proof. Using the substitution $w = \sqrt{t} (u^{\frac32} - \frac1{t})$, the integral becomes
\begin{align*}
\int_{1}^\infty u^{-\frac14} e^{-t\left(u^{\frac32}-\frac1{t}\right)^2} du
&= \frac{2}{3\sqrt{t}} \int_{\sqrt{t}-\frac1{\sqrt{t}}}^\infty \left(\frac{w}{\sqrt{t}}+\frac1{t}\right)^{-\frac12} e^{-w^2} dw
\overset{t\to 0}{\to} \frac{2}{3} \int_{-\infty}^{\infty} e^{-w^2} dw = \frac{2\sqrt{\pi}}{3}.
\end{align*}

For the sum, noting that $e^{2m^\frac32 - m^3 t} = e^{\frac1{t}} e^{-t(m^\frac32 - \frac1{t})^2}$, we approximate $\sum_{m \geq 1} g(m^{\frac32})$, where
\begin{equation*}
g(x) := x^{-\frac16} e^{-t \left(x-\frac1{t}\right)^2}.
\end{equation*}
We prove the integral approximation by showing that the summands $g(m)$ are unimodal, with a peak near $m \sim t^{-\frac23}$. The growth rate of these terms is determined by the derivative of $g(x)$, which is
\begin{equation*}
g'(x)= 2t x^{-\frac76} e^{-t \left(x - \frac1{t}\right)^2} \left(-x^2 + \frac{x}{t} - \frac{1}{12t}\right).
\end{equation*}
The terms in front are always positive for $x > 0$, so the sign of $g'(x)$ is determined by the quadratic expression in the parentheses. The roots of this expression are
\begin{equation}
\notag
x_1 =\frac{1}{2t} \left(1- \sqrt{1 - \frac{t}{3}}\right), \qquad
x_2 = \frac{1}{2t} \left(1+ \sqrt{1 - \frac{t}{3}}\right);
\end{equation}
the minimum of $g$ occurs at $x_1$, and the maximum at $x_2$.

However, the behavior near $x_1$ does not have any effect on $F(t)$, since, as $t\to 0$ $x_1 \sim  \frac{1}{12}$. Specifically, this means that for sufficiently small $t$, the minimum of $g(x)$ occurs at some $x < 1$, and thus the sum beginning at $m = 1$ is monotonically increasing until $m_2 := \lfloor x_2^{\frac23} \rfloor$, and monotonically decreasing beginning from $m_2 + 1$.
Moreover, we need the observation that
$
x_2\sim\frac1{t}.
$

The standard integral comparison criterion for monotonic functions now implies that
\begin{gather*}
\int_{1}^{m_2} g\left(x^{\frac32}\right)dx
<
	\sum_{m=1}^{m_2} g\left(m^{\frac32}\right)
	<	
	\int_{1}^{m_2} g\left(x^{\frac32}\right)dx
	+ g\left(x_2\right)
,\\
\int_{m_{2}+1}^{\infty} g\left(x^{\frac32}\right)dx
<
	\sum_{m=m_2+1}^{\infty} g\left(m^{\frac32}\right)
	<	
	\int_{m_2+1}^\infty g\left(x^{\frac32}\right)dx
	+ g\left(x_2\right)
.
\end{gather*}
From this, we obtain that
\begin{align*}
\int_{1}^{\infty} g\left(x^{\frac32}\right)dx
&<
	\int_{1}^{m_2} g\left(x^{\frac32}\right)dx
	+
	g\left(x_2\right)
	+
	\int_{m_2+1}^{\infty} g\left(x^{\frac32}\right)dx
\\
&<
	\sum_{m=1}^{\infty} g\left(m^{\frac32}\right)
	+
	g\left(x_2\right)
<
	\int_{1}^{\infty} g\left(x^{\frac32}\right)dx
	+ 3g(x_2).
\end{align*}
Thus
\begin{equation}
\label{E:gsum-int}
\left|\sum_{m \geq 1} g\left(m^\frac32\right) - \int_1^\infty g\left(x^\frac32\right) dx\right|
< 2 g\left(x_2\right).
\end{equation}

Using that $g(x_2)=o(1)$ and the integral evaluation
$\int_1^\infty g(x^\frac32) dx \sim \frac{2 \sqrt{\pi}}{3}$,
the bound in \eqref{E:gsum-int}
shows that the sum and integral are asymptotically equal.
\end{proof}

We now prove that the final sum in \eqref{E:f2sums} is asymptotically smaller than the remaining terms.
\begin{lemma}[\cite{AK36}, p. 350]
\label{L:sumQuotient}
As $t \to 0^+$, we have
$$
\frac{\sum_{m \geq 1} m^{-\frac14} e^{2m^{\frac32} - (m+1)^3 t}}{\sum_{m \geq 1} m^{-\frac14} e^{2m^{\frac32} - m^3 t}} = o(1).
$$
\end{lemma}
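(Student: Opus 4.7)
The plan is to write the numerator as $\mathcal{N}$, the denominator as $\mathcal{D}$, and show $\mathcal{N}=o(e^{1/t})$ by bounding $\mathcal{N}$ termwise and then splitting the resulting sum at a scale slightly below the Gaussian peak. Recall from the proof of Proposition \ref{P:gsumAsymp} that the dominant mass of $\mathcal{D}$ is concentrated at $m\sim t^{-2/3}$, and $\mathcal{D}\sim\frac{2\sqrt{\pi}}{3}e^{1/t}$.

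First, since $(m+1)^3\geq m^3+3m^2$, we immediately have
\begin{equation*}
\mathcal{N}\leq \sum_{m\geq 1} m^{-\frac14}e^{2m^{\frac32}-m^3 t-3m^2 t}.
\end{equation*}
At the peak $m\sim t^{-2/3}$, the extra factor satisfies $e^{-3m^2 t}\sim e^{-3 t^{-1/3}}$, which is the source of the asymptotic vanishing. The heuristic is therefore clear: the additional exponential damping beats the $e^{1/t}$ order of $\mathcal{D}$ throughout the range that matters for $\mathcal{D}$, while outside that range the original summand is already too small to compete.

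To make this rigorous, fix any constant $c\in(0,2^{-2/3})$ and set $M:=\lfloor c\,t^{-2/3}\rfloor$. For $m\leq M$, drop the factor $e^{-3m^2 t}\leq 1$ and use $m^{-1/4}\leq 1$ together with monotonicity of $e^{2m^{3/2}}$ in $m$ to obtain
\begin{equation*}
\sum_{m=1}^{M} m^{-\frac14}e^{2m^{\frac32}-m^3 t}
\leq M\,e^{2M^{\frac32}}=O\!\left(t^{-\frac23}e^{\frac{2c^{3/2}}{t}}\right)=o\!\left(e^{\frac{1}{t}}\right),
\end{equation*}
where the last step uses $2c^{3/2}<1$. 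For $m>M$, bound $e^{-3m^2 t}\leq e^{-3M^2 t}=e^{-3c^2 t^{-1/3}(1+o(1))}$ and apply Proposition \ref{P:gsumAsymp} to the remaining sum, which gives
\begin{equation*}
\sum_{m>M} m^{-\frac14}e^{2m^{\frac32}-m^3 t-3m^2 t}
\leq e^{-3c^2 t^{-1/3}(1+o(1))}\cdot\tfrac{2\sqrt{\pi}}{3}e^{\frac{1}{t}}(1+o(1))=o\!\left(e^{\frac{1}{t}}\right).
\end{equation*}

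Combining these two estimates yields $\mathcal{N}=o(e^{1/t})$, and dividing by $\mathcal{D}\sim\frac{2\sqrt{\pi}}{3}e^{1/t}$ from Proposition \ref{P:gsumAsymp} gives the claim. The only real calibration is the choice of $c$: one needs $c<2^{-2/3}$ so that the pre-peak piece is subexponential in $1/t$, but any fixed $c>0$ ensures that the post-peak damping $e^{-3c^2 t^{-1/3}}$ overwhelms $\mathcal{D}$. Since both constraints are met by, e.g., $c=\tfrac14$, the argument is not delicate and no finer analysis of the peak geometry is required.
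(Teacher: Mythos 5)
Your proof is correct and follows essentially the same route as the paper: both arguments split the sum at a cutoff below the peak $m\sim t^{-\frac23}$, bound the initial segment as asymptotically negligible, and use the termwise damping factor $e^{-(3m^2+3m+1)t}\le e^{-3m^2t}$ together with Proposition \ref{P:gsumAsymp} to kill the tail. The only differences are cosmetic: the paper cuts at $t^{-\frac12-\varepsilon}$ and bounds the initial piece via the monotonicity of $g$ (getting $o(1)$ there), whereas you cut at $ct^{-\frac23}$ with $2c^{\frac32}<1$ and use the cruder but sufficient bound $Me^{2M^{\frac32}}=o\bigl(e^{\frac1t}\bigr)$.
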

\begin{proof}
We show that the sum in the numerator is termwise smaller than the denominator for $m > t^{-\frac12 - \varepsilon}$ (for some $\varepsilon > 0$), and the sum over this initial segment of $m$s is itself asymptotically negligible. More precisely, since $g(m^\frac32)$ is increasing in this range we obtain for $0<\varepsilon<\frac{1}{6}$,
\begin{align}
\label{E:gInitSum}
 \sum_{m = 1}^{\left\lfloor t^{-\frac12 - \varepsilon} \right\rfloor} g\left(m^\frac32\right)
&\leq t^{-\frac12 - \varepsilon} g\left(t^{-\frac32\left(\frac12 + \varepsilon\right)}\right) =
t^{-\frac38(1 + 2 \varepsilon)} e^{-t \left(t^{-\frac34(1+2\varepsilon)} - \frac1{t}\right)^2}=o(1).
\end{align}

This also gives
\begin{equation}
\notag
\sum_{m \geq 1} g\left(m^{\frac32}\right) \sim \sum_{m > t^{-\frac12 - \varepsilon}} g\left(m^{\frac32}\right),
\end{equation}
since Proposition \ref{P:gsumAsymp} shows that the left-hand side is asymptotically $\frac{2 \sqrt{\pi}}{3}$.

Continuing, since each term in the numerator of the lemma statement is smaller than the corresponding denominator term,
\eqref{E:gInitSum} also implies that
\begin{equation*}
\sum_{m \geq 1} m^{-\frac14} e^{2m^{\frac32} - (m+1)^3 t}
= o(1) + \sum_{m > t^{-\frac12 - \varepsilon}} m^{-\frac14} e^{2m^{\frac32} - (m+1)^3 t}.
\end{equation*}
The final sum can then be compared termwise to the denominator sum, that is,
\begin{align*}
\sum_{m > t^{-\frac12 - \varepsilon}}  m^{-\frac14} e^{2m^\frac32 - (m+1)^3 t}
&= \sum_{m > t^{-\frac12 - \varepsilon}} m^{-\frac14} e^{2m^\frac32 - m^3 t} e^{-\left(3m^2 + 3m + 1\right)t}
\\
&< e^{-3t^{-2\varepsilon}} \sum_{m > t^{-\frac12 - \varepsilon}} m^{-\frac14} e^{2m^\frac32 - m^3 t}.
\end{align*}
Using \eqref{E:gInitSum} and Proposition \ref{P:gsumAsymp} gives that
\begin{gather*}
\frac{\sum_{m \geq 1} m^{-\frac14} e^{2m^{\frac32} - (m+1)^3 t}}{\sum_{m \geq 1} m^{-\frac14} e^{2m^{\frac32} - m^3 t}}
=
	\frac{o(1) + \frac{2\sqrt{\pi}}{3}e^{-3t^{-2\varepsilon}+\frac{1}{t}}  }{o(1) + \frac{2\sqrt{\pi}}{3} e^{\frac{1}{t}} }
=
o(1).\qedhere
\end{gather*}

\end{proof}

Finally, the proof of Proposition \ref{P:AK} is completed by combining Proposition \ref{P:gsumAsymp} and Lemma \ref{L:sumQuotient}. In particular, by plugging these in to \eqref{E:f2sums}, we find that the main asymptotic term is
\begin{align}
\notag
f(t) & \sim \frac{1}{1-e^{-t}} \sum_{m \geq 1} m^{-\frac14} e^{2 m^{\frac32} - m^3 t}
\sim  \frac{2 \sqrt{\pi}}{3t} e^{\frac1{t}}.
\end{align}

\subsection{Eisenstein series}
Here we give an example to demonstrate that the expansion in Theorem \ref{Theorem:EulerMaclaurin1DShifted}
may fail when $w$ is allowed to approach $0$ along tangential paths in the right half-plane.
For this we use the  Eisenstein series of weight four for the full modular group. However,
examples of this type generally arise from any modular form to which
Theorem \ref{Theorem:EulerMaclaurin1DShifted} can be applied.
Set
\begin{align*}
E_4(\tau)
&:=
	1+240\sum_{n\ge1} \frac{n^3 q^n}{1-q^n}
=
	1+240\sum_{n\ge1} \sigma_3(n)q^n
,\\
g_3(q)
&:=
	\sum_{n\ge1} \sigma_3(n) q^n	
=
	\sum_{n\ge1}\frac{n^3q^n}{1-q^n}	
.
\end{align*}
From the modular transformation,
\begin{gather*}
E_4\left(-\frac{1}{\tau}\right)
=
	\tau^{4} E_4(\tau),
\end{gather*}
for $\IM(\tau)>0$, we find that, for $\RE(w)>0$,
\begin{align}\label{Eq:EisensteinEulerMacluarinProblem1}
g_3\left(e^{-w}\right)
&=
	\frac{E_4\left(\frac{iw}{2\pi}\right)-1}{240}
=
	\frac{\left(\frac{2\pi}{w}\right)^4 E_4\left(\frac{2\pi i}{w}\right)-1}{240}
=
	\left(\frac{2\pi}{w}\right)^{4}\left( g_3\left(e^{-\frac{4\pi^2}{w}}\right)+\frac{1}{240}  \right)
	-
	\frac{1}{240}.
\end{align}
Thus when $w\rightarrow0$ on paths non-tangential to the imaginary
axis, we have for each $N\in\N_0$ that
\begin{align}\label{Eq:EisensteinEulerMacluarinProblem2}
g_3(e^{-w})
&=
	\frac{\pi^{4}}{15w^4} - \frac{1}{240} + O_N\left(w^N\right)
.
\end{align}
As explained in Example 3 of \cite{Zagier1}, one can also deduce \eqref{Eq:EisensteinEulerMacluarinProblem2} directly
from Theorem \ref{Theorem:EulerMaclaurin1DShifted} by taking
$f(w):=\frac{w^3 e^{-w}}{1-e^{-w}}$ and $a=1$, writing
\begin{gather*}
g_3(e^{-w}) =
\sum_{n\ge1} \frac{n^3e^{-wn}}{1-e^{-wn}}
=
\frac{1}{w^{3}}\sum_{m\ge0} f(w(m+1)).
\end{gather*}
However, along paths
tangential to the imaginary axis, \eqref{Eq:EisensteinEulerMacluarinProblem2}
may fail.
For example, along the path $w=x+ix^{\frac13}$,
\eqref{Eq:TangentialPath} shows that every point along the unit circle
is a limit point of $e^{-\frac{4\pi^2}{w}}$.
In particular, since $g_3(q)\rightarrow \infty$ as $q\rightarrow1^-$, we see
that
$\limsup_{w\rightarrow0} | g_3(e^{-\frac{4\pi^2}{w}}) | = \infty$
on the path $w=x+ix^\frac13$. Thus
\eqref{Eq:EisensteinEulerMacluarinProblem1} implies that
\eqref{Eq:EisensteinEulerMacluarinProblem2} cannot hold
along this path. This gives an example where Theorem \ref{Theorem:EulerMaclaurin1DShifted}
fails without the additional assumption that $w\in D_\theta$.
Again this is visible from numerical data.
In Table \ref{Table:Errors2},
we give a numerical approximation of the size of
$g_3(e^{-w}) - \frac{\pi^{4}}{15w^4} + \frac{1}{240}$
along three different paths.

\begin{center}
\begin{table}[h]
\caption {Approximate size of the error $g_3(e^{-w}) - \frac{\pi^{4}}{15w^4} + \frac{1}{240}$. }
\begin{tabular}{c || c|c|c}\label{Table:Errors2}
\diagbox[height=0.75cm, width=2cm]{$\hspace{5pt}x$}{$\hspace{8pt}\mathrm{\hspace{-5pt}path}\hspace{-4pt}$}
& $w=x+ix$ & $w=x+ix^2$ & $w=x+ix^{\frac13}$
\\
\hline \hline &&&
\\[-2ex]
$10^{-1}$ & $0.18293\cdot 10^{-55}$		& $0.67139\cdot 10^{-139}$ 	 &  $19030\cdot 10^{16}$\\
$10^{-2}$ & $0.53168\cdot 10^{-823}$		& $0.17223\cdot 10^{-1679}$ 	&  $37122\cdot 10^{21}$\\
$10^{-3}$ & $0.22863\cdot 10^{-8534}$		& $0.22329\cdot 10^{-17106}$ 	&  $75858\cdot 10^{24}$\\
$10^{-4}$ & $0.49431\cdot 10^{-85684}$		& $0.10073\cdot 10^{-171409}$ 	&  $12065\cdot 10^{27}$\\
$10^{-5}$ & $0.11030\cdot 10^{-857216}$	& $0.49985\cdot 10^{-1714479}$ &  $58757\cdot 10^{28}$
\end{tabular}
\end{table}
\end{center}

\subsection{Asymptotic expansions valid along any path}
In contrast to the previous example, there are also functions where the asymptotic expansion of Theorem
\ref{Theorem:EulerMaclaurin1DShifted} is valid on general paths.
For example, taking $f(w):=e^{-w}$
and $a=0$ in Theorem \ref{Theorem:EulerMaclaurin1DShifted} gives
\begin{align}\label{Eq:EulerMaclaurinNoProblem}
\sum_{m\geq0} e^{-wm}
&=
	\frac{1}{w} - \sum_{n=0}^{N-1} \frac{(-1)^n B_{n+1}(0)}{(n+1)!}w^n + O_N\left(w^N\right)	
,
\end{align}
as $w\rightarrow0$ in any $D_\theta$.
The left-hand side
of \eqref{Eq:EulerMaclaurinNoProblem} can be summed as a geometric series when $\RE(w)>0$,
\begin{gather*}
\sum_{m\geq 0} e^{-wm}
=
	\frac{1}{1-e^{-w}}
.
\end{gather*}
The right hand-side of \eqref{Eq:EulerMaclaurinNoProblem} can be interpreted in terms of a truncation
of the generating function for Bernoulli numbers, specifically
\begin{gather*}
\frac{1}{w} - \sum_{n\geq0} \frac{(-1)^n B_{n+1}(0)}{(n+1)!}w^n
=
	\frac{1}{w}\sum_{n\geq0} \frac{ B_{n}(0)}{n!}(-w)^n
=
	\frac{1}{1-e^{-w}}
.
\end{gather*}
From this we see that \eqref{Eq:EulerMaclaurinNoProblem} is valid with $w\rightarrow0$
along any path, as $\frac{w}{1-e^{-w}}$ is analytic in $|w| < 2\pi$

That the asymptotic expansion of Theorem \ref{Theorem:EulerMaclaurin1DShifted} is valid
for general paths to $0$ for some functions and not others should come as no surprise.
The series $\sum_{m\ge0}f(w(m+a))$ defines a holomorphic function in the right
half-plane and the series diverges at $w=0$. The analytic behavior of such a function can
be anything from a simple pole at $w=0$ to the imaginary axis being a natural boundary
in the sense that the singularities are dense along the axis.

\section{Ingham's Tauberian theorem and the proof of Theorem \ref{Corollary:CorToIngham}}
\label{S:Ingham}

In this section we discuss Ingham's Tauberian theorem and use it to prove Theorem \ref{Corollary:CorToIngham}.
We start by recalling the following theorem, which is due to Ingham \cite[Theorem 1]{Ingham1}.
\begin{theorem}\label{Theorem:Ingham}
Let $D$ be a connected open subset of $\mathbb{C}$ that contains
$(0,h]$ (for some $h\in\R^+$). For $t\in(0,h]$, we let $\delta(t)$ denote the distance from $t$ to
the complement of $D$.  Suppose that $\varphi$ and $\chi$ are functions on $D$ that satisfy the following conditions:
\begin{enumerate}[leftmargin=*, label={\rm(\alph*)}]
\item  $\varphi$ and $\chi$ are
holomorphic on $D$, and are positive on $(0,h]$;
\item $-t\varphi^\prime(t) \to \infty$ as $t \to 0^+$;
\item $-\dfrac{\delta(t) \varphi^\prime(t)}{t\sqrt{\varphi^{\prime\prime}(t)}}\rightarrow\infty$ as $t \to 0^+$, and
\item $\varphi^{\prime\prime}(t+z) = O\left( \varphi^{\prime\prime}(t) \right)$ and
	$\chi(t+z) = O\left( \chi(t) \right)$ uniformly in $z$ for $|z|<\delta(t)$ as $t \to 0^+.$
\end{enumerate}
Let $A:[0,\infty)\rightarrow\mathbb{R}$ be a weakly increasing function with $A(0)=0$. Set
\begin{align*}
f(z) := \int_0^\infty e^{-zu}dA(u),
\end{align*}
and assume that $f(z)$ exists for $\RE(z)>0$. Suppose that the following conditions hold:
\begin{enumerate}[leftmargin=\widthof{\rm(ii)}+\labelsep, label={\rm(\roman*)}]
\item $f(z)\sim \chi(z)e^{\varphi(z)}$ as $z\rightarrow0$ with $z$ in $D$;
\item for each fixed $\Delta>0$, $f(z)=O(\chi(|z|)e^{\varphi(|z|)})$ as $z\rightarrow0$
where  $|\IM(z)|\le \Delta\RE(z)$.
\end{enumerate}
Then
\begin{gather*}
A(x)
\sim
	\frac{\chi(\psi(x)) e^{ \varphi(\psi(x)) + x\psi(x) } }{\psi(x) \sqrt{2\pi \varphi^{\prime\prime}(\psi(x))  }}
,\qquad\qquad \mbox{as } x\rightarrow\infty,
\end{gather*}
where $\psi$ is the inverse function of $-\varphi^\prime$.
\end{theorem}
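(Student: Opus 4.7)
The plan is to prove Theorem \ref{Theorem:Ingham} by the saddle point method applied to a smoothed inverse Laplace transform of $f$, combined with a Tauberian comparison that exploits the monotonicity of $A$. The phase $\varphi(z)+xz$ has a real saddle at $z_0 := \psi(x)$, characterized by $\varphi'(z_0) = -x$; condition (b) ensures $z_0 \to 0^+$ as $x \to \infty$, so $z_0$ eventually lies in the region where (i)--(ii) apply, while condition (c) ensures that the saddle's Gaussian width $(\varphi''(z_0))^{-1/2}$ is much smaller than the distance $\delta(z_0)$ to $\partial D$.

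First I would form a smoothed version of $A$,
\[\widetilde A(x) := \int_0^\infty A(u)\,\kappa_x(u)\,du,\]
where $\kappa_x$ is a nonnegative kernel of unit mass (Gaussian or Beurling--Selberg type) localized in a window of width $\rho = \rho(x)$ to be calibrated. Using $f(z) = \int_0^\infty e^{-zu}\,dA(u)$ and Fubini, $\widetilde A$ admits the contour representation
\[\widetilde A(x) = \frac{1}{2\pi i}\int_{z_0 - i\infty}^{z_0 + i\infty} f(z)\,e^{xz}\,\widehat\kappa_x(z)\,\frac{dz}{z},\]
where $\widehat\kappa_x$, the analytic Fourier--Laplace transform of $\kappa_x$, decays rapidly along the vertical line through $z_0$.

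The core step is the saddle point analysis. On a local arc $\{z = z_0 + it : |t| \le r\}$ with $(\varphi''(z_0))^{-1/2} \ll r \ll \delta(z_0)$, condition (d) together with Taylor expansion yields
\[\varphi(z_0 + it) + x(z_0 + it) = \varphi(z_0) + xz_0 - \tfrac12\varphi''(z_0)\,t^2 + o\bigl(\varphi''(z_0)\,t^2\bigr),\]
so by (i) the integrand equals $\chi(z_0)\,e^{\varphi(z_0)+xz_0}\,e^{-\tfrac12 \varphi''(z_0) t^2}(1+o(1))/z_0$ on the arc. Integrating this Gaussian produces the asserted main term $\chi(\psi(x))\,e^{\varphi(\psi(x)) + x\psi(x)}/(\psi(x)\sqrt{2\pi\,\varphi''(\psi(x))})$. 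The tail splits into two regimes: for $r \le |t| \le \Delta z_0$ I would combine (ii) with the Gaussian decay provided by (d); for $|t| > \Delta z_0$ I would invoke $|f(z_0+it)| \le f(z_0)$, immediate from $A$ being nondecreasing, together with the rapid decay of $\widehat\kappa_x$.

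Finally, the Tauberian passage from $\widetilde A$ to $A$ uses monotonicity: since $A$ is weakly increasing, $A(x - c\rho)(1-o(1)) \le \widetilde A(x) \le A(x + c\rho)(1+o(1))$ for a suitable $c > 0$, and the main term varies slowly in $x$ on the scale $\rho$, so the asymptotic for $\widetilde A(x)$ transfers to a pointwise asymptotic for $A(x)$. I expect the principal obstacle to be the simultaneous calibration of $r$, $\rho$, and $\kappa_x$ so that (b)--(d) and (ii) together force every tail contribution to be $o(1)$ of the main term. Condition (ii) is indispensable here: the naive monotone bound $|f(z)| \le f(\RE(z))$ only controls $e^{\varphi(\RE(z))}$, which can dwarf $e^{\varphi(|z|)}$ on approaches tangential to the imaginary axis, as the counterexample in Section \ref{S:Examples:Counterex} illustrates.
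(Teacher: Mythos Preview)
The paper does not supply its own proof of Theorem~\ref{Theorem:Ingham}; it is quoted verbatim from Ingham~\cite[Theorem~1]{Ingham1} and then applied (together with Theorem~\ref{Theorem:InghamPrime}) to deduce Theorem~\ref{Corollary:CorToIngham}. So there is no in-paper proof to compare your proposal against.

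That said, your outline is broadly faithful to Ingham's original argument: the saddle-point analysis of a contour integral for a smoothed version of $A$, with the saddle at $z_0=\psi(x)$, the local Gaussian coming from (d), and the Tauberian unsmoothing via monotonicity, is exactly the skeleton of \cite{Ingham1}. Ingham's smoothing is slightly different in detail---he works with iterated averages of $A$ (equivalently, powers of $1/z$ in the inversion integral) rather than a general kernel $\kappa_x$---but the effect is the same: it supplies the extra decay along the vertical line that the bare inversion formula lacks. Your identification of the two tail regimes (moderate $|t|$ controlled by (ii) plus Gaussian decay, large $|t|$ controlled by $|f(z_0+it)|\le f(z_0)$ and kernel decay) and of the calibration of scales as the main technical point is accurate. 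What you have written is a credible sketch, not a proof; filling in the error terms uniformly and verifying that (b)--(d) really do force each tail to be $o(1)$ of the main term is where the work lies, and Ingham's paper remains the reference for those details.
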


Ingham also discussed a special case that eliminates the need for calculating an exact asymptotic formula for $f$ throughout the complex domain $D$ (although it is still necessary to bound the asymptotic order of $f$ as in condition (ii)). The following result is \mbox{\cite[Theorem $1'$]{Ingham1}}.

\begin{theorem}\label{Theorem:InghamPrime}
Suppose that conditions {\rm(a)},\,{\rm(b)}, {\rm(c)}, and {\rm(d)} of Theorem \ref{Theorem:Ingham}
are satisfied, and additionally, $-t^k\varphi^\prime(t)$ decreases to $0$ for some fixed $k\in\R$. Then condition {\rm (i)} may be replaced by
\begin{enumerate}[leftmargin=*, label={\rm(\alph*)}]
\item[{\rm (i$'$)}]
$f(t)\sim \chi(t)e^{\varphi(t)}$ as $t\rightarrow0^+$.
\end{enumerate}
\end{theorem}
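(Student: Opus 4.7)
My plan is to modify the saddle-point argument in the proof of Theorem \ref{Theorem:Ingham}, replacing its use of the full complex asymptotic (i) by the real-axis asymptotic (i') evaluated at the saddle point, with the extra monotonicity of $-t^k\varphi'(t)$ bridging the gap between real and complex behavior. Once the local asymptotic of $f$ along a short complex arc through the saddle is established, the remainder of Ingham's argument carries over without change.

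The proof of Theorem \ref{Theorem:Ingham} recovers $A(x)$ from $f$ by an inverse Laplace-type representation, of the schematic form
\[
A(x) = \frac{1}{2\pi i} \int_{c-i\infty}^{c+i\infty} \frac{f(z)}{z} e^{xz}\, dz,
\]
evaluated by the saddle-point method at the real saddle $c = \psi(x)$. The contour splits into a short central arc $C_1$ of length $\asymp 1/\sqrt{\varphi''(c)}$ surrounding $c$, and a tail $C_2$; on $C_1$ one uses (i) to identify $f$ with $\chi e^\varphi$ and produces a Gaussian integral, while on $C_2$ one uses (ii) to bound the integrand and show the contribution is exponentially smaller.

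For Theorem \ref{Theorem:InghamPrime}, where (i) is weakened to (i'), the monotonicity $-t^k\varphi'(t)\downarrow 0$ should force $\varphi'$ to be regularly varying near $0$ and, in particular, ensure that $\varphi$ is slowly varying on a disk of radius $\asymp 1/\sqrt{\varphi''(c)}$ around the saddle $c$. Combined with condition (d) and the bound (ii), this should give $f(z)\sim f(c)$ and $\chi(z)e^{\varphi(z)}\sim \chi(c)e^{\varphi(c)}$ uniformly on $C_1$. Then (i'), namely $f(c)\sim\chi(c)e^{\varphi(c)}$, propagates into $f(z)\sim\chi(z)e^{\varphi(z)}$ on all of $C_1$, which is precisely what the saddle-point evaluation requires.

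The main obstacle I expect is the local propagation argument: showing quantitatively that $f$ and $\chi e^\varphi$ each vary only by $o(1)$ relatively along $C_1$. For $f$ this requires applying Cauchy's formula on a disk around $c$ and using (ii) to bound the maximum, then integrating to control $f(z)-f(c)$; for $\chi e^\varphi$ one would Taylor-expand, using (d) for the $\varphi''$-estimate and the new monotonicity to control the size and sign of $\varphi'$ on the relevant scale. The subtle point is ensuring that the monotonicity of $-t^k\varphi'(t)$ on the positive real axis translates into quantitative control of $\varphi'$ on complex disks of the saddle-point scale, for which a Karamata-type regularity lemma, or a direct complex-analytic estimate via the maximum principle applied to $\varphi'$, should be the key ingredient.
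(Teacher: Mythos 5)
First, note that the paper does not prove Theorem \ref{Theorem:InghamPrime} at all: it is quoted verbatim from Ingham (\cite{Ingham1}, Theorem $1'$), so your proposal has to be judged as a reconstruction of Ingham's argument rather than compared with a proof in the text. The decisive gap is your ``local propagation'' step. On the central arc $C_1$, of length comparable to $1/\sqrt{\varphi''(c)}$ around the saddle $c=\psi(x)$, it is \emph{not} true that $\chi(z)e^{\varphi(z)}\sim\chi(c)e^{\varphi(c)}$, nor that $f(z)\sim f(c)$: the increment of $\varphi$ on that scale is dominated by the linear term $\varphi'(c)(z-c)$, whose size $|\varphi'(c)|/\sqrt{\varphi''(c)}$ tends to infinity by condition (c) (with $\delta(t)\asymp t$; for the model case $\varphi(t)=\gamma/t$ it is of order $c^{-1/2}$). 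Hence along $C_1$ the modulus of $\chi e^{\varphi}$ decays like a Gaussian and its argument winds around an unbounded number of times; only the combination $e^{\varphi(z)+xz}$ with $x=-\varphi'(c)$ is slowly varying, which is precisely the saddle-point mechanism. So ``$\varphi$ slowly varying on a disk of radius $\asymp 1/\sqrt{\varphi''(c)}$'' is irreparably false, and the Cauchy-estimate route cannot rescue it: bounding $f'$ through (ii) on a disk of radius $\rho$ costs a factor of order $e^{|\varphi'(c)|\rho}$, and no choice of $\rho$ makes $e^{|\varphi'(c)|\rho}/\bigl(\rho\sqrt{\varphi''(c)}\bigr)$ small (check this with $\varphi(t)=\gamma/t$).

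What the major-arc computation actually needs is $f(z)\sim\chi(z)e^{\varphi(z)}$ \emph{uniformly on the complex arc} $C_1$, i.e.\ a genuine upgrade of (i$'$) to (i) in a complex neighborhood of the positive real axis; that upgrade is the entire content of Theorem \ref{Theorem:InghamPrime}, and it is exactly the step your sketch leaves unproved. It cannot follow from (i$'$) plus soft regularity of $\varphi$ and $\chi$ alone: the Avakumovi\'c--Karamata series of Section \ref{S:Examples:Counterex} satisfies (i$'$) with perfectly regular $\varphi$, $\chi$ and fails the conclusion, so the angular bound (ii) must be used inside the propagation itself. The natural mechanism (and, in substance, Ingham's) is a normal-families argument of Vitali--Montel/Lindel\"of type applied to the quotient $f/(\chi e^{\varphi})$: boundedness on suitable thin sub-regions comes from (ii) for every $\Delta$, convergence to $1$ on the real axis is (i$'$), and the monotone decay of $-t^k\varphi'(t)$ supplies the regular-variation/uniformity needed to rescale. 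Your proposal names some of these ingredients, but deploys them only in support of the false ``slowly varying at the saddle scale'' claim, so as written the proof does not go through.
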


We are now ready to prove Theorem \ref{Corollary:CorToIngham}.

\begin{proof}[Proof of Theorem \ref{Corollary:CorToIngham}]
We first show that \eqref{asb} follows from \eqref{asbsum} applied to the series $C(q):=(1-q)B(q)$.
The monotonicity of $b_n$ implies that $C(q)$ has non-negative coefficients.
 Furthermore, as $z \to 0$ we have
\begin{equation*}
C(e^{-z})=(1-e^{-z})B(e^{-z})\sim zB(e^{-z}).
\end{equation*}

The theorem is trivially true for $\lambda=0$, so we assume $\lambda>0$.
To prove \eqref{asbsum}, we apply Theorems \ref{Theorem:Ingham} and \ref{Theorem:InghamPrime}
with
\begin{gather*}
\varphi(z) := \frac{\gamma}{z}
,\qquad\qquad
\chi(z) := \lambda\operatorname{Log}\left(\frac{1}{z}\right)^\alpha z^{\beta}
,\qquad\qquad
A(x) := \sum_{n<x} b_n.
\end{gather*}
We let $D$ consist of those
points $z$ which satisfy $|{\rm Arg}(z)|< \frac{\pi}{4}$. In particular,
in this region $\delta(t)=\frac{t}{\sqrt{2}};$  this follows from applying the Law of Sines to calculate the distance from $t$ to the ray along ${\rm Arg}(z) = \frac{\pi}{4}.$
To verify that the conditions on $\varphi$ and $\chi$ are satisfied, we differentiate
\begin{align*}
\varphi^\prime(z) = -\frac{\gamma}{z^2}
,\qquad\qquad
\varphi^{\prime\prime}(z)
=\frac{2\gamma}{z^3}
, \qquad\qquad
-\frac{\delta(t)\varphi^\prime(t)}{t\sqrt{\varphi^{\prime\prime}(t)}}
=	
	\frac{\sqrt{\gamma}}{2t^{\frac12}}
.
\end{align*}
It is then obvious that conditions (a), (b), and (c) hold,
as well as the extra growth condition from Theorem \ref{Theorem:InghamPrime}.
If $|z| < \frac{t}{\sqrt{2}}$ then
$(1-\frac{1}{\sqrt{2}})t < |z+t| < (1+\frac{1}{\sqrt{2}})t$, and and thus as $t\rightarrow0^+$ we have
$\varphi^{\prime\prime}(z+t)=O(\varphi^{\prime\prime}(t))$
uniformly in $z$. Furthermore,
\begin{gather*}
\left| \Log\left( \frac{1}{z+t}  \right) \right| = \left|\log\left(\frac{1}{|z+t|}\right)+i{\rm Arg}(z+t)\right|
\sim
	\left| \log\left( \frac{1}{|z+t|} \right) \right|
\sim
	\left| \log\left(\frac{1}{t} \right) \right|
,
\end{gather*}
and thus condition (d) of Theorem \ref{Theorem:Ingham} holds.

By the definition of the Riemann-Stieltjes integral, we have
\begin{gather*}
f(z) = \int_{0}^\infty e^{-zu}dA(u)
=
\sum_{n\geq 0} b_n e^{-zn}
=
B(e^{-z}).
\end{gather*}
By the assumptions on $B$ in Theorem \ref{Corollary:CorToIngham}, $f(z)$ exists for $\RE(z)>0$
and conditions (ii) of Theorem \ref{Theorem:Ingham} and (i') of Theorem \ref{Theorem:InghamPrime}
are satisfied.
Thus all the hypotheses of Theorem \ref{Theorem:Ingham}
are satisfied. We note that $\psi(x)=\sqrt{\frac{\gamma}{x}}$, and thus we have
\begin{align*}
A(x)
=
\sum_{n<x}b_n
\sim
\frac{ \lambda  \gamma^{\frac{\beta}{2}-\frac14 }\log\left( x \right)^\alpha }
{2^{\alpha+1}\sqrt{\pi} x^{\frac{\beta}{2}+\frac14 }}
e^{2\sqrt{\gamma x}}
.
\end{align*}
A short calculation using the expression on the right-hand side shows that $A(N+1) \sim A(N)$, which implies \eqref{asbsum} on plugging in $x = N+1$ to the left-hand side.
\qedhere
\end{proof}

\section{The Euler-Maclaurin summation formula and the proofs of Theorems \ref{Theorem:EulerMaclaurin1DShifted}, \ref{Prop:EulerMaclaurin1DPoleShifted}, and \ref{Theorem:EulerMaclaurinGeneral}}
\label{S:Euler-Mac}

\subsection{The one-dimensional case}
In this sub-section, we prove Theorems \ref{Theorem:EulerMaclaurin1DShifted} and \ref{Prop:EulerMaclaurin1DPoleShifted}.
We make use of Taylor's theorem in the following form:
Suppose that $f$ is holomorphic in a neighborhood containing $C_R(0)$, the circle of radius $R$ centered at the origin. If $|z|<R$, then
\begin{gather}\label{Taylor}
f(z)
=
	\sum_{k=0}^{N-1} \frac{f^{(k)}(0)}{k!} z^k
	+
	\frac{z^N}{2\pi i}\int_{C_R(0)} \frac{f(w)}{w^N(w-z)}dw
.
\end{gather}
In particular, for $z$ sufficiently small,
\begin{equation}
\label{E:Taylor}
\sum_{k\geq N} \frac{f^{(k)}(0)}{k!}z^k
\ll
	z^N \max_{|w|=R}\left|f(w)\right|
.
\end{equation}

We are now ready to prove Theorem \ref{Theorem:EulerMaclaurin1DShifted}.

\begin{proof}[Proof of Theorem \ref{Theorem:EulerMaclaurin1DShifted}]
The assumption that $f$ has sufficient decay ensures that the sum converges, and also implies that $wf(w) \to 0$ uniformly as $|w| \to \infty$ in $D_\theta$. Finally,
if $n \in \N_0$ is fixed and $|\alpha|\le\theta$, then we have
\begin{align}
\label{E:decayCondns}
\int_0^{(\cos(\alpha)+i\sin(\alpha))\infty} \left|f^{(n)}(z)\right|dz &= O_n(1) \quad \mbox{uniformly in $\alpha$},
\end{align}
where throughout the proof, all integrals are taken along straight lines. The claim in \eqref{E:decayCondns} follows by splitting the integral as
\begin{align*}
\int_0^{\left(\cos(\alpha)+i\sin(\alpha)\right)C_1(n)} \left|f^{(n)}(z)\right| dz
+
\int_{\left(\cos(\alpha)+i\sin(\alpha)\right)C_1(n)}^{\left(\cos(\alpha)+i\sin(\alpha)\right)\infty} \left|f^{(n)}(z)\right| dz,
\end{align*}
where $C_1(n)$ is a constant such that $|w|\ge C_1(n)$ implies that $|f^{(n)}(w)|\le C_2(n)|w|^{-1-\varepsilon_n}$ for some $C_2(n)$. The second piece is then clearly uniformly bounded, and the first piece is as well, due to the fact that the region $|\Arg(w)|\le \theta$ and $|w|\le C_1(n)$ is compact (and $f^{(n)}(w)$ is continuous).

We now present the fundamental identities that underlie Euler-Maclaurin summation, which follow from integration by parts and properties of Bernoulli polynomials. Throughout, supposing that $\mu\in\R$ is fixed, we take $w$ sufficiently small so that $f$ is holomorphic in a region containing the line segment $[w(\mu-1),0]$. We use \eqref{E:B1}, \eqref{E:B'}, and the fact that $B_0(x)=1$ to obtain
\begin{align*}
\int_0^1 f(w(x+\mu-1))  dx
&=
	\frac12 \left( f(w\mu)+ f(w(\mu-1)) \right)
	-w\int_0^1 f'(w(x+\mu-1)) B_{1}(x)dx
.
\end{align*}
Next, for $n \geq 1$, we use \eqref{E:BnDef} and \eqref{E:B'} to conclude that
\begin{align*}
&\hspace{-1.5em}\int_0^1  \frac{f^{(n)}(w(x+\mu-1))  B_n(x)}{n!} dx
\\
&=
	\frac{B_{n+1}}{(n+1)!}\left(f^{(n)}(w\mu) - f^{(n)}(w(\mu-1)) \right)
	-w\int_0^1 \frac{f^{(n+1)}(w(x+\mu-1)) B_{n+1}(x)}{(n+1)!} dx
.
\end{align*}

Using induction on $N\in\N$, one may then show that
\begin{align}
\nonumber
&\int_{0}^1 f(w(x+\mu-1)) dx\\
&=\frac{1}{2}\left(f(w\mu)+f(w(\mu-1)) \right)
+
\sum_{n=1}^{N-1}\frac{(-1)^n B_{n+1}}{(n+1)!}
\left(f^{(n)}(w\mu)-f^{(n)}(w(\mu-1)) \right) w^n
\nonumber\\&\hspace{6cm}\quad
+
(-1)^Nw^N\int_0^1 \frac{f^{(N)}(w(x+\mu-1)) B_N(x)}{N!} dx
.\label{EqEulerMaclaurinProofPart1}
\end{align}

We take $w$ sufficiently small, so that  $f$ is holomorphic in a region containing
the line segment $[wa,0]$.
Summing \eqref{EqEulerMaclaurinProofPart1} with $\mu = m+a$,
for $m\in \{1,2,\dotsc,M\}$, gives that
\begin{align}
\notag
\int_a^{M+a} f(wx) dx
&=
	\frac12\sum_{m=1}^M \left( f(w(m+a)) + f(w(m+a-1)) \right)	
	\\ \notag &\quad
	+
	\sum_{n=1}^{N-1}
	\frac{(-1)^nB_{n+1}}{(n+1)!}
	\sum_{m=1}^M \left(f^{(n)}(w(m+a)) - f^{(n)}(w(m+a-1)) \right)w^n
	\\ \notag &\quad	
	+
	(-1)^Nw^N\sum_{m=1}^M \int_0^1 \frac{ f^{(N)}(w(x+m+a-1)) B_N(x)}{N!} dx
\\ \notag
&=
	\frac12\left(f(wa)+f(w(M+a)) \right)
	+\sum_{m=1}^{M-1} f(w(m+a))
	\\ \notag &\quad	
	+
	\sum_{n=1}^{N-1} \frac{(-1)^nB_{n+1}}{(n+1)!}
	\left( f^{(n)}(w(M+a)) - f^{(n)}(wa) \right)w^n
	\\ \notag &\quad
	+
	(-1)^Nw^N \int_a^{M+a} \frac{ f^{(N)}(wx) \widetilde{B}_N(x-a)}{N!} dx
,
\end{align}

\noindent where the {\it $N$-th periodic Bernoulli polynomial} is defined by $\widetilde{B}_N(x) := B_N(x - \lfloor x\rfloor)$.
Substituting $z=wx$ in the integrals, we obtain
\begin{align*}
\frac{1}{w}\int_{wa}^{w(M+a)} \!\! f(z) dz
&=
	\sum_{m=0}^{M-1} f(w(m+a))
	+
	\sum_{n=0}^{N-1} \frac{B_{n+1}(0) \left( f^{(n)}(wa) - f^{(n)}(w(M+a)) \right) }{(n+1)!}w^n
	\\&\quad
	+
	(-1)^{N}w^{N-1} \int_{wa}^{w(M+a)} \frac{ f^{(N)}(z) \widetilde{B}_N\left(\frac{z}{w}-a\right)}{N!} dz
,
\end{align*}
which in the limit $M\rightarrow\infty$ becomes
\begin{align}\label{Eq:EulerMaclaurinShifted1}
\sum_{m\geq 0} f(w(m+a))
&=
	\frac{1}{w}\int_{wa}^{w\infty} f(z) dz
	-\sum_{n=0}^{N-1} \frac{B_{n+1}(0) f^{(n)}(wa)}{(n+1)!}w^n
	\nonumber\\&\qquad\qquad\qquad\qquad\qquad
	-
	(-1)^{N}w^{N-1} \int_{wa}^{w\infty} \frac{ f^{(N)}(z) \widetilde{B}_N\left(\frac{z}{w}-a\right)}{N!} dz
.
\end{align}

We now claim that
\begin{gather}\label{claimint}
\int_0^{w\infty } f(z) dz
=
\int_0^{\infty} f(x) dx.
\end{gather}
In particular, since $f$ has no poles in $D_\theta$, the Residue theorem implies that for $r > 0$ we have (writing $\alpha:=\Arg(w)$)
\begin{equation*}
\int_0^{r \cos(\alpha)} f(x) dx + \int_{r \cos(\alpha)}^{r(\cos(\alpha) + i \sin(\alpha))} f(z) dz
= \int_0^{wr} f(z) dz.
\end{equation*}
The second integral vanishes as $r \to \infty$, since
\begin{align*}
\int_{r\cos(\alpha)}^{r(\cos(\alpha)+i\sin(\alpha)} f(z)dz
&\ll
r\sin(|\alpha|)	\max_{|c| \le r\sin(|\alpha|) } |f(r\cos(\alpha)+ci)  |
\\
&\le
r\sin(\theta) \max_{|c| \le r\sin(\theta) } |f(r\cos(\alpha)+ci)  |
\quad \rightarrow \quad 0.
\end{align*}
This yields \eqref{claimint}.

Moreover, for $w$ sufficiently small, we have that
\begin{gather*}
\int_0^{wa} f(z) dz
=
\int_0^{wa} \sum_{k\geq0} \frac{f^{(k)}(0)}{k!}z^k dz
=
\sum_{k\geq0} \frac{f^{(k)}(0)a^{k+1}}{(k+1)!}w^{k+1}
.
\end{gather*}
Thus \eqref{Eq:EulerMaclaurinShifted1} becomes
\begin{align}\label{Eq:EulerMaclaurinShifted2}
\sum_{m\geq0} f(w(m+a))
&=
	\frac{1}{w}\int_{0}^{\infty } f(x) dx
	-
	\sum_{k\geq0} \frac{f^{(k)}(0)a^{k+1}}{(k+1)!}w^{k}
	-
	\sum_{n=0}^{N-1} \frac{B_{n+1}(0) f^{(n)}(wa)}{(n+1)!}w^n
	\nonumber\\&\qquad\qquad\qquad\qquad\qquad
	-
	(-1)^{N}w^{N-1} \int_{wa}^{w\infty} \frac{ f^{(N)}(z) \widetilde{B}_N\left(\frac{z}{w}-a\right)}{N!} dz
.
\end{align}
In order to obtain the desired expression, we plug \eqref{Taylor} into the second sum \eqref{Eq:EulerMaclaurinShifted2}, finding that
\begin{align*}
&\sum_{n=0}^{N-1} \frac{B_{n+1}(0) f^{(n)}(wa)}{(n+1)!}w^n \\ \nonumber
& =
	\sum_{n=0}^{N-1} \frac{B_{n+1}(0) w^n}{(n+1)!} \left(\sum_{m = 0}^{N-n-1} \frac{f^{(n+m)}(0) (wa)^m}{m!} + \frac{(wa)^{N-n}}{2 \pi i} \int_{C_R(0)} \frac{f^{(n)}(z)}{z^{N-n} (z-wa)} dz\right)
\nonumber\\
&=
	\sum_{k=0}^{N-1}
	\frac{f^{(k)}(0) w^{k}}{(k+1)!}
	\sum_{n=0}^{k} \binom{k+1}{n+1} B_{n+1}(0) a^{k-n}
	+
	\frac{w^{N}}{2\pi i}
	\sum_{n=0}^{N-1} \frac{B_{n+1}(0)a^{N-n}}{(n+1)!}
	\int_{C_R(0)}
	\frac{f^{(n)}(z)}{z^{N-n}(z-wa)}dz.
\end{align*}
Here $R$ is chosen such that $C_R(0)$ is contained in the domain in which $f$ is holomorphic. The first sum above is then further simplified using \eqref{E:Bkx+y}, as this implies that
\begin{align*}
\sum_{n=0}^{k} \binom{k+1}{n+1} B_{n+1}(0) a^{k-n}
&=
-a^{k+1}
		+ \sum_{n=0}^{k+1} \binom{k+1}{n} B_{n}(0) a^{k+1-n}
= -a^{k+1} + B_{k+1}(a).
\end{align*}
Thus \eqref{Eq:EulerMaclaurinShifted2} becomes
\begin{align}
\label{Eq:EulerMaclaurinShiftedFinal}
\sum_{m\geq0} f(w(m+a))
&=
	\frac{1}{w}\int_{0}^{\infty } f(x) dx
	-
	\sum_{n=0}^{N-1}
	\frac{B_{n+1}(a) f^{(n)}(0)}{(n+1)!}w^{n}
	-
	\sum_{k\geq N} \frac{f^{(k)}(0)a^{k+1}}{(k+1)!}w^{k}
	\nonumber\\&\quad
	-
	\frac{w^{N}}{2\pi i}
	\sum_{n=0}^{N-1} \frac{B_{n+1}(0)a^{N-n}}{(n+1)!}
	\int_{C_R(0)}
	\frac{f^{(n)}(z)}{z^{N-n}(z-wa)}dz
	\nonumber\\&\quad
	-
	(-1)^{N}w^{N-1} \int_{wa}^{w\infty} \frac{ f^{(N)}(z) \widetilde{B}_N\left(\frac{z}{w}-a\right)}{N!} dz
.
\end{align}
We now claim that all terms except the first and the second on the right-hand side are $O(w^N)$. For the third term, we use \eqref{E:Taylor} to find that
\begin{gather*}
\sum_{k\geq N} \frac{f^{(k)}(0)a^{k+1}}{(k+1)!}w^{k}
=
\frac{1}{w}
\int_{0}^{wa}\sum_{k\geq N} \frac{f^{(k)}(0)z^k}{k!}dz
\ll
	w^{N} \max_{|z|=R}|f(z)|
\ll
	w^{N}.
\end{gather*}
For the fourth term, we only need to show that the integral is uniformly bounded as $w \to 0$ in $D_\theta$.
Since $f$ is holomorphic on $C_R(0)$, $f^{(n)}(z)$ is uniformly bounded, and $|\frac{1}{z^{N-n}}| = \frac{1}{R^{N-n}}$.
 Furthermore, $\frac{1}{z-wa}$ is uniformly bounded on $|aw| < \frac{R}{2}$.
 This yields the claim for the fourth term.

Finally, the fact that $\widetilde{B}_N(x)$ is periodic implies that $\widetilde{B}_N(\frac{z}{w}-a)$ is bounded on the ray
from the origin through $w$. We therefore have the following bound on the fifth term,
\begin{gather*}
(-1)^{N}w^{N-1} \int_{wa}^{w\infty} \frac{ f^{(N)}(z) \widetilde{B}_N\left(\frac{z}{w}-a\right)}{N!} dz
\ll	
	w^{N-1} \int_0^{w\infty}  \left|f^{(N)}(z)\right| dz
\ll	
	w^{N-1}
,
\end{gather*}
where the final bound follows from  \eqref{E:decayCondns}. This completes the proof.
\end{proof}

We record one particularly useful corollary to Theorem \ref{Theorem:EulerMaclaurin1DShifted} that allows for alternating signs.
\begin{corollary}\label{Cor:EulerMaclaurinBooleShift}
Suppose that $0\le \theta < \frac{\pi}{2}$.
Let $f:\C\rightarrow\C$ be holomorphic in a domain containing
$D_\theta$, and assume that
$f$ and all of its derivatives are of sufficient decay
as $|w|\rightarrow\infty$ in $D_\theta$.
Then for $a \in \R$ and $N\in\N_0$,
\begin{gather*}
\sum_{m\geq0} (-1)^m f(w(m+a))
=
	\frac{1}{2}\sum_{n=0}^{N-1} \frac{E_{n}(a) f^{(n)}(0)}{n!}w^n
	+
	O_N\left(w^N\right)
,
\end{gather*}
uniformly, as $w\rightarrow0$ in $D_\theta$. Recall that the Euler polynomials are defined in (\ref{Euler}).
\end{corollary}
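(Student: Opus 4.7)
The plan is to reduce the alternating sum to two ordinary (non-alternating) sums by splitting according to parity, apply Theorem \ref{Theorem:EulerMaclaurin1DShifted} to each, and then convert the resulting difference of Bernoulli polynomials into Euler polynomials via identity \eqref{E:Bn=En}.

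First, I would write
\[
\sum_{m\geq 0}(-1)^m f(w(m+a)) = \sum_{m\geq 0} f\!\left(2w\!\left(m+\tfrac{a}{2}\right)\right) - \sum_{m\geq 0} f\!\left(2w\!\left(m+\tfrac{a+1}{2}\right)\right).
\]
Since $D_\theta$ is a cone, $w\in D_\theta$ implies $2w\in D_\theta$, and the sufficient decay hypotheses on $f$ and its derivatives pass through unchanged. Theorem \ref{Theorem:EulerMaclaurin1DShifted} therefore applies to each of these sums with parameter $2w$ and shifts $a/2$ and $(a+1)/2$, respectively.

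Subtracting the two expansions, the leading integral terms $\frac{1}{2w}\int_0^\infty f(x)\,dx$ cancel, leaving a finite sum whose $n$-th coefficient is proportional to $B_{n+1}(\tfrac{a}{2})-B_{n+1}(\tfrac{a+1}{2})$. Identity \eqref{E:Bn=En} with $x=a$ rewrites this difference as $-\tfrac{n+1}{2^{n+1}}E_n(a)$, after which the factors of $2^{n+1}$ and $n+1$ cancel against $(2w)^n$ and $(n+1)!$ to yield the claimed formula. There is essentially no obstacle in this argument; it is a short algebraic consolidation of two applications of Theorem \ref{Theorem:EulerMaclaurin1DShifted}. The only items worth checking are that $D_\theta$ is closed under $w\mapsto 2w$ (immediate, as $D_\theta$ is a cone) and that the two $O_N(w^N)$ error terms combine into a single $O_N(w^N)$ upon subtraction (clear, since the implicit constants depend only on $N$ and on $f$, not on the choice of shift).
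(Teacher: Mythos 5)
Your proposal is correct and follows essentially the same route as the paper: the paper likewise splits the alternating sum by parity into $\sum_{r\in\{0,1\}}(-1)^r\sum_{m\geq 0}f(2w(m+\tfrac{a}{2}+\tfrac{r}{2}))$, applies Theorem \ref{Theorem:EulerMaclaurin1DShifted} with $w\mapsto 2w$ and $a\mapsto\tfrac{a}{2}+\tfrac{r}{2}$, and converts the resulting Bernoulli-polynomial differences to Euler polynomials via \eqref{E:Bn=En}. The cancellation of the integral terms and the bookkeeping with the factors $2^{n+1}$, $n+1$, and $(2w)^n$ are exactly as you describe.
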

\begin{proof}
We write
\begin{align*}
\sum_{m\geq0} (-1)^m f(w(m+a))
&=
	\sum_{r\in \{0,1\}} (-1)^r
	\sum_{m\geq0} f\left(2w\!\left(m+\frac{a}{2}+\frac{r}{2}\right)\right)
,
\end{align*}
and thus we may apply Theorem \ref{Theorem:EulerMaclaurin1DShifted} with $w\mapsto 2w$ and $a \mapsto \frac{a}{2} + \frac{r}{2}$. This yields
\begin{align*}
\sum_{m\geq 0} (-1)^m f(w(m+a))
&=
	-\sum_{n=0}^{N-1} \frac{\left( B_{n+1}\left(\frac{a}{2}\right) - B_{n+1}\left(\frac{a}{2}+\frac{1}{2}\right) \right) f^{(n)}(0)}{(n+1)!}(2w)^n
	+
	O_N\left(w^N\right)
\\
&=
	\frac{1}{2}
	\sum_{n=0}^{N-1}\frac{ E_{n}(a) f^{(n)}(0)}{n!}w^n
	+
	O_N\left(w^N\right)
,
\end{align*}
where in the second equality we use \eqref{E:Bn=En}.
\end{proof}

\begin{remark}
The expansion in Corollary \ref{Cor:EulerMaclaurinBooleShift} can alternatively be
proven using the Euler-Boole summation formula (see \cite[equation (5)]{BCM}), namely
\begin{align*}
\sum_{m=0}^{M-1} (-1)^m f(m+a)
&=
	\frac{1}{2}\sum_{n=0}^{N-1}\frac{E_n(a)}{n!}
	\left( f^{(n)}(0) + (-1)^{M-1} f^{(n)}(M) \right)
\\&\quad
	+
	\frac{1}{2(N-1)!}\int_{0}^M
	f^{(N)}(x) \widetilde{E}_{N-1}(a-x) dx
,
\end{align*}
where $0\le a<1$ and
$\widetilde{E}_{n}(x)$ are the periodic Euler functions defined through
the Euler polynomials by $\widetilde{E}_n(x):=E_n(x)$ for $0 \leq x<1$, and
$\widetilde{E}_n(x+1):=-\widetilde{E}_n(x)$.

More generally, one sees that the method used in the proof of Corollary  \ref{Cor:EulerMaclaurinBooleShift} applies to sums of the form $\sum_m \chi(m)f(wm)$,
where $\chi$ is periodic. Of specific interest are the cases when the
average of $\chi$ is zero, such as with non-principal Dirichlet characters, as the integral terms of Theorem
\ref{Theorem:EulerMaclaurin1DShifted} then cancel, leaving an asymptotic expansion that  converges at $w=0$.
\end{remark}

In the case that there is a simple pole at zero, the main new ingredient in the proof is the use of series representations for the complex logarithm digamma function in order to identify the asymptotic contribution of the pole, as we use Theorem \ref{Theorem:EulerMaclaurin1DShifted} to obtain the remainder of the asymptotic expansion.

\begin{proof}[Proof of Theorem \ref{Prop:EulerMaclaurin1DPoleShifted}]
Set
\begin{gather*}
g(w) := \frac{b_{-1}e^{-w}}{w},  \qquad\qquad
h(w) := f(w) - g(w).
\end{gather*}
Using the series expansion of the exponential function, we obtain that
\begin{gather*}
g(w) =  b_{-1} \sum_{n\geq-1} \frac{(-1)^{n+1}w^n}{(n+1)!}.
\end{gather*}
In particular, $g$ has a simple pole at the origin with residue $b_{-1}$,
and thus $h$ is holomorphic at the origin. By Theorem
\ref{Theorem:EulerMaclaurin1DShifted},
we have that
\begin{multline}
\sum_{m\geq0} h(w(m+a))=
	\frac{1}{w}\int_0^\infty \left( f(x) - \frac{b_{-1}e^{-x}}{x} \right) dx	\\
	-
	\sum_{n=0}^{N-1} \frac{B_{n+1}(a) }{n+1}
	\left( b_n + \frac{b_{-1}(-1)^n}{(n+1)!} \right)w^n+
	O_N\left(w^N\right)
.
\notag
\end{multline}

The claim follows once we show that for $\RE(w)>0$,
\begin{gather}\label{Eq:EulerMaclaurinLogShiftedEq3}
-b_{-1} \sum_{n\geq0} \frac{B_{n+1}(a)}{(n+1)(n+1)!}(-w)^{n}
=
b_{-1}\frac{\Log\left(\frac{1}{w}\right)}{w} - \sum_{m\geq0} g(w(m+a))
	+
	\frac{b_{-1}C_a}{w}
.
\end{gather}
Since for $\RE(w)>0$, we have
\begin{align*}
\sum_{m\geq0} g(w(m+a))
&
=
	\frac{b_{-1}}{w}\sum_{m\geq0} \frac{e^{-w(m+a)}}{m+a}
,
\end{align*}
we may instead prove the equivalent identity,
\begin{gather}\label{Eq:EulerMaclaurinLogShiftedEq4}
\Log\left(\frac{1}{w}\right) - \sum_{m\geq0} \frac{e^{-w(m+a)}}{m+a} + C_a
=
\sum_{n\geq0} \frac{B_{n+1}(a)(-w)^{n+1}}{(n+1)(n+1)!}.
\end{gather}
To see \eqref{Eq:EulerMaclaurinLogShiftedEq4}, we first note that
\begin{align*}
\frac{d}{dw} \sum_{n\geq0} \frac{B_{n+1}(a)(-w)^{n+1}}{(n+1)(n+1)!}
&=
	-\sum_{n\geq0} \frac{B_{n+1}(a)(-w)^{n}}{(n+1)!}
=
	-\frac1{w} + \frac1{w}\sum_{n\geq0} \frac{B_{n}(a)(-w)^n}{n!}
\\
&=
	-\frac1{w} - \frac{e^{-wa}}{e^{-w}-1}
=
	-\frac1{w} + \sum_{m\geq0} e^{-(m+a)w}
\\
&=
	\frac{d}{dw}\left(\Log\left(\frac{1}{w}\right)
	-	
	\sum_{m\geq0} \frac{e^{-(m+a)w}}{m+a}\right).
\end{align*}
Thus we have
\begin{align*}
\sum_{n\geq0} \frac{B_{n+1}(a)(-w)^{n+1}}{(n+1)(n+1)!}
&=
\Log\left(\frac{1}{w}\right) - \sum_{m\geq0} \frac{e^{-(m+a)w}}{m+a} + C
,
\end{align*}
for some constant $C$, and we see that the left hand-side provides an analytic
continuation of the right hand-side in a neighborhood of $w=0$. However, the
left hand-side is clearly zero when $w=0$. To evaluate the limit of
the right hand-side, as $w\rightarrow0$ with $\RE(w)>0$, we first
note that
\begin{align}\label{E:Logsums}
&
	\Log\left(\frac{1}{w}\right) - \sum_{m \geq 0} \frac{e^{-w(m+a)}}{m+a}
\notag
	=\Log\left(\frac{1}{w}\right) - e^{-wa}\sum_{m \geq 0} \frac{e^{-mw}}{m+1}
	- e^{-aw} \sum_{m \geq 0} e^{-mw} \left(\frac{1}{m+a} - \frac{1}{m+1}\right)
\notag\\
&=
	\Log\left(\frac{1}{w}\right) + e^{(1-a)w} \Log\left(1 - e^{-w}\right)
	- e^{-aw} (1-a) \sum_{m \geq 0} \frac{e^{-mw}}{(m+a)(m+1)}
\notag\\
&=
	\Log\left(\frac{1}{w}\right)\left(1 - e^{(1-a)w}\right)  + e^{(1-a)w} \Log\left(\frac{1 - e^{-w}}{w}\right)
	- e^{-aw} (1-a) \sum_{m \geq 0} \frac{ e^{-mw}}{(m+a)(m+1)}
,
\end{align}
where in the final equality we use that
\begin{gather*}
\Log\left(1 - e^{-w}\right)
=
	\Log\left(\frac{1 - e^{-w}}{w}\right)
	-\Log\left(\frac{1}{w}\right),
\end{gather*}
for $\RE(w)>0$, since both $1-e^{-w}$ and $\frac1w$ lie in the right half-plane. A short calculation with l'Hospital's rule shows that the first two terms
in \eqref{E:Logsums} tend to zero (as $w \to 0^+$), and since the convergence of the series is uniform in $w$, for $\RE(w)>0$, we have
\begin{gather*}
\lim_{w\rightarrow 0}
\left(
	\Log\left(\frac{1}{w}\right) - \sum_{m \geq 0} \frac{e^{-w(m+a)}}{m+a}
\right)
=
	- (1-a)\sum_{m \geq 0} \frac{1}{(m+a)(m+1)}
=
	-C_a.
\end{gather*}
Therefore \eqref{Eq:EulerMaclaurinLogShiftedEq4}, and as such \eqref{Eq:EulerMaclaurinLogShiftedEq3},
holds.
\end{proof}

\subsection{The multivariable Euler-Maclaurin summation formula}
We now turn to the multivariable version  of
the Euler-Maclaurin asymptotic expansion stated in Theorem \ref{Theorem:EulerMaclaurinGeneral}.
The following proposition is a refined version that enables a proof by induction.

\begin{proposition}\label{PropositionEulerMaclaurinWithError}
Suppose that $0 \leq \theta_j < \frac{\pi}{2}$ for $1 \leq j \leq s$, and that $f:\C^s\rightarrow\C$ is holomorphic in a domain containing $D_{\b \theta}$.
If $f$ and all of its derivatives are of sufficient decay in $D_{\b \theta}$, then
 for $1 \leq r < s$, $\b{a}\in\R^s$ and $N\in\N_0$, we have
\begin{align}\label{Eq:EMFull}
&\frac{1}{w^r}\int_{[0,\infty)^r} f(\b{x}) dx_1\dotsb dx_r
\nonumber\\
&=
\sum_{\b{m}\in \N_0^r } f( w(\b{m}+\b{a}), \b{x}_{r+1,s} )
	-
	(-1)^r\sum_{ \b{n}\in \NN^r}	
	f^{(\b{n}, \b{0}_{s-r} )}( \b{0}_r, \b{x}_{r+1,s} )	
	\prod_{1\le j\le r} \frac{B_{n_j+1}(a_j)}{(n_j+1)!}w^{n_j} 	
	\nonumber\\&\quad
	-
	\sum_{\emptyset\subsetneq \mathscr{S} \subsetneq \{1,\dots,r\}}	 
	\frac{(-1)^{|\mathscr{S}|}}{w^{r-|\mathscr{S}|}}
	\sum_{\substack{ n_j\in \mathcal N_N\\ j\in\mathscr{S}}}
	\int_{[0,\infty)^{r-|\mathscr{S}|}}
	\left[
		\prod_{j\in\mathscr{S}}
		\frac{\partial^{n_j}}{\partial x_j^{n_j}}
		f(\b{x})
	\right]_{\substack{x_j=0\\ j\in\mathscr{S}}}
	\prod_{\substack{1\le k\le r\\[0.5ex] k\not\in\mathscr{S}}} dx_k
	\prod_{j\in\mathscr{S}} \frac{B_{n_j+1}(a_j)}{(n_j+1)!}w^{n_j}
	\nonumber\\&\quad
	+
	w^{N-r}g_{r,w}(\b{x}_{r+1,s})
,
\end{align}
where
$\b{x}\in\R^s$,
$\b{x}_{r+1,s} := (x_{r+1},\dots, x_s)$,
$\b{0}_j$ is the zero vector of length $j$, and
$g_{r,w}:\C^{s-r}\rightarrow\C$ is such that
$g_{r,w}(\b{x}_{r+1,s})\ll 1$ uniformly in $\b{x}_{r+1,s}$ as
$w\rightarrow 0$ in $D_{\theta_1}\!\cap\dotsb\cap D_{\theta_r}$ and
\begin{align*}
\int_{[0,\infty)^j} |g_{r,w}(\b{x}_{r+1,s} )| dx_{r+1}\dotsm dx_{r+j}
\ll 1,
\end{align*}
uniformly in $w$ and $(x_{r+j+1},\dotsc,x_s)$
for $1\le j\le s-r$. When $r=s$, \eqref{Eq:EMFull}
holds with $\b{x}_{r+1,s}$ being the empty vector and $g_{s,w}$ a function of $w$
such that $g_{s,w}\ll 1$ as $w\rightarrow 0$ in
$D_{\theta_1}\!\cap\dotsb\cap D_{\theta_r}$.
\end{proposition}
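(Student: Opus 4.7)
The plan is induction on $r$, with base case $r=1$.

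For the base case, I would apply Theorem \ref{Theorem:EulerMaclaurin1DShifted} to the function $z \mapsto f(z, \b{x}_{2,s})$ with $\b{x}_{2,s}$ held fixed; the holomorphicity and decay hypotheses pass through uniformly in these parameters. Rearranging so as to isolate $\frac{1}{w}\int_0^\infty f\,dx_1$ yields \eqref{Eq:EMFull} for $r=1$ (the sum $\sum_{\emptyset\subsetneq\mathscr S\subsetneq\{1\}}$ is empty). The remainder takes the form $w^{N-1} g_{1,w}(\b{x}_{2,s})$, where $g_{1,w}$ is, up to rescaling, the integral remainder $w\!\int_0^{w\infty}\! f^{(N,\b{0})}(z, \b{x}_{2,s}) \widetilde B_N(\cdots)/N!\,dz$ that appears in the proof of Theorem \ref{Theorem:EulerMaclaurin1DShifted}. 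The uniform bound $g_{1,w}\ll 1$ and the iterated integrability of $g_{1,w}$ in $\b{x}_{2,s}$ both follow from the sufficient-decay hypothesis on $f^{(N,\b{0})}$ in all $s$ coordinates together with the boundedness of $\widetilde B_N$.

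For the inductive step, I would use the factorization
\[
\frac{1}{w^{r+1}}\int_{[0,\infty)^{r+1}}\!f\,dx_{[1,r+1]} = \frac{1}{w}\int_0^\infty\!\left[\frac{1}{w^r}\int_{[0,\infty)^r}\!f\,dx_{[1,r]}\right]dx_{r+1},
\]
and apply the induction hypothesis to the inner bracket, with $x_{r+1}$ and $\b{x}_{r+2,s}$ as free parameters. The bracket then decomposes into a pure-sum piece $\sum_{\b{m}'\in\N_0^r}\!f(w(\b{m}'+\b{a}'),x_{r+1},\b{x}_{r+2,s})$, a collection of mixed boundary-integral pieces indexed by nonempty proper $\mathscr S'\subseteq\{1,\ldots,r\}$, and an error $w^{N-r}g_{r,w}(x_{r+1},\b{x}_{r+2,s})$. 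Applying $\frac{1}{w}\int_0^\infty\!\cdot\,dx_{r+1}$ to each mixed piece simply absorbs the $x_{r+1}$ integration, yielding precisely the $(r+1)$-statement term indexed by $\mathscr S=\mathscr S'$ (with $r+1\notin\mathscr S$). For the pure-sum piece, swapping sum and integral and applying Theorem \ref{Theorem:EulerMaclaurin1DShifted} in direction $r+1$ produces the full $(r+1)$-fold sum $\sum_{\b{m}\in\N_0^{r+1}}\!f(w(\b{m}+\b{a}),\b{x}_{r+2,s})$ (matching Part 1 of the $(r+1)$-statement) plus a residual $\sum_{n_{r+1}}\frac{B_{n_{r+1}+1}(a_{r+1})}{(n_{r+1}+1)!}w^{n_{r+1}}\sum_{\b{m}'} \partial_{r+1}^{n_{r+1}} f(w(\b{m}'+\b{a}'),0,\b{x}_{r+2,s})$, which I would re-expand by applying Proposition \ref{PropositionEulerMaclaurinWithError} a second time to the inner $\sum_{\b{m}'}$ (with $h(\b{x}_{[1,r]}):=\partial_{r+1}^{n_{r+1}} f(\b{x}_{[1,r]}, 0, \b{x}_{r+2,s})$). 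This second application produces exactly the $(r+1)$-statement terms indexed by $\mathscr S\ni r+1$, the coefficient match coming from the identity $-(-1)^{|\mathscr S'|+1}/w^{r-|\mathscr S'|}=(-1)^{|\mathscr S'|}/w^{r-|\mathscr S'|}$ when $\mathscr S = \mathscr S' \cup\{r+1\}$.

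The error $w^{N-r} g_{r,w}$ contributes, after $\frac{1}{w}\int_0^\infty\!\cdot\,dx_{r+1}$ or the invocation of Proposition \ref{PropositionEulerMaclaurinWithError} on the pure-sum residual, a term of the form $w^{N-r-1} g_{r+1,w}(\b{x}_{r+2,s})$; its uniform bound and iterated integrability are inherited from the corresponding properties of $g_{r,w}$, the 1D Euler-Maclaurin remainder, and the sufficient-decay hypothesis on $f$. The principal difficulty is the combinatorial sign bookkeeping: every nonempty $\mathscr S \subseteq \{1,\ldots,r+1\}$ must receive exactly its prescribed coefficient in \eqref{Eq:EMFull}, and the two expansion routes above (the mixed piece for $r+1\notin\mathscr S$, and the pure-sum re-expansion for $r+1\in\mathscr S$) must each furnish precisely one contribution with the correct sign and power of $w$. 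A secondary technical point is verifying that all interchanges of summation, integration, and differentiation required throughout are justified by the decay hypotheses and the uniform bounds built into the statement.
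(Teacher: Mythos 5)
Your proposal follows essentially the same route as the paper's proof: induction on the number of summed variables, with the base case coming from the explicit one-dimensional formula \eqref{Eq:EulerMaclaurinShiftedFinal} (retaining its remainder terms as $g_{1,w}$), and the inductive step obtained by integrating the induction hypothesis in the new variable, applying the one-dimensional expansion termwise to the resulting sum, and re-expanding the boundary-derivative sums by a second application of the induction hypothesis, which is exactly how the paper produces and combines its $h_{1,w}$, $h_{2,w}$, $h_{3,w}$. The one point to execute carefully, as the paper does for $h_{2,w}$, is that the termwise step must use the explicit remainder of \eqref{Eq:EulerMaclaurinShiftedFinal} rather than the bare $O_N(w^N)$ statement of Theorem \ref{Theorem:EulerMaclaurin1DShifted}, since the implied constant there is not uniform over the infinitely many functions $x\mapsto f(w(\b{m}'+\b{a}'),x,\cdot)$ and the resulting errors must be summed over $\b{m}'$ using the decay in the first coordinates.
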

\begin{proof}
Throughout the proof $s \in \N$ is fixed, and we proceed by induction on $r$.
At various points in the proof, we consider $f^{(n_1, \dots, n_s)}$, with each $n_k \leq N$,
and restrict this derivative to a function of the single variable $x_j$
(with all other variables held fixed).
We choose $R > 0$ such that all such functions are holomorphic in a neighborhood containing $C_R(0)$.
This is possible because the decay assumption implies that $R$ can be chosen uniformly for each
individual function, and there are finitely many in total.

The base case of $r=1$
is \eqref{Eq:EulerMaclaurinShiftedFinal}
with $f(x) \mapsto f(x,\b{x}_{2,s})$ and the resulting
$g_{1,w}(\b{x}_{2,s})$ is
\begin{align*}
g_{1,w}(\b{x}_{2,s})
&:=
	w^{1-N}\sum_{k_1\geq N} \frac{f^{(k_1,\b{0}_{s-1})} (0,\b{x}_{2,s}) a_1^{k_1+1}}{(k_1+1)!}w^{k_1}
	\\&\quad
	+
	\frac{w}{2\pi i}\sum_{n_1\in\NN}
	\frac{ B_{n_1+1}(0)a_1^{N-n_1} }{(n_1+1)!}
	\int_{C_R(0)}
	\frac{ f^{(n_1,\b{0}_{s-1})}(z_1,\b{x}_{2,s}) }{z_1^{N-n_1}(z_1-wa_1)}
	dz_1
	\\&\quad	
	+	
	(-1)^{N}w\int_{a_1}^{\infty} \frac{f^{(N,\b{0}_{s-1})}(wx_1,\b{x}_{2,s}) \widetilde{B}_N(x_1-a_1)}{N!}dx_1
,
\end{align*}
where $R>0$ is sufficiently small. Due to the assumption of sufficient decay,
the integrals converge uniformly
in $\b{x}_{2,s}$ and thus we see that the second and third terms above meet the conditions required of $g_{1,w}$.
For the first term, \eqref{E:Taylor} gives
\begin{gather*}
w^{1-N}\sum_{k_1\geq N} \frac{f^{(k_1,\b{0}_{s-1})} (0,\b{x}_{2,s}) a_1^{k_1+1}}{(k_1+1)!}w^{k_1}
\ll
	w\max_{|z_1|=R} |f(z_1,\b{x}_{2,s})|
,
\end{gather*}
which also meets the conditions required of $g_{1,w}$ due to the assumption of sufficient decay.

We now fix $r$ with $2\le r\le s$, assume that  \eqref{Eq:EMFull} is true for $r-1$,
and verify that it holds for $r$. We let
$\b{x}_{r-1}:=(x_1,\dotsc,x_{r-1})$
and
$\b{a}_{r-1}:=(a_1,\dots,a_{r-1})$.
By assumption, we can take \eqref{Eq:EMFull} with $r-1$ and integrate with respect to
$x_{r}$, and also divide by $w$, which yields
\begin{align}\label{Eq:EulerMaclaurinMultidimensional1}
&\frac{1}{w^{r}}\int_{[0,\infty)^{r}} f(\b{x}) dx_1\dotsb dx_{r}
\nonumber\\
&=
	\frac{1}{w} \sum_{\b{m}\in \N_0^{r-1} }
		\int_{0}^\infty f( w(\b{m}+\b{a}_{r-1}), x_{r}, \b{x}_{r+1,s} ) dx_r
	\nonumber\\&\quad
	+
	\frac{(-1)^r}{w}
	\sum_{ \b{n}\in \mathcal N_N^{r-1}}	
	\int_0^\infty f^{(\b{n}, \b{0}_{s-r+1} )}( \b{0}_{r-1}, x_{r},\b{x}_{r+1,s} ) dx_{r}	
	\prod_{1\le j \leq r-1} \frac{B_{n_j+1}(a_j)}{(n_j+1)!}w^{n_j}
	\nonumber\\&\quad
	-
	\!\!
	\sum_{\emptyset\subsetneq \mathscr{S} \subsetneq \{1,\dots,r-1\}}	
	\!\!
	\frac{(-1)^{|\mathscr{S}|}}{w^{r-|\mathscr{S}|}}
	\sum_{\substack{ n_j\in\mathcal N_N\\ j\in\mathscr{S}}}
	\int_{[0,\infty)^{r-|\mathscr{S}|}}
	\left[
		\prod_{j\in\mathscr{S}}
		\frac{\partial^{n_j}}{\partial x_j^{n_j}}
		f(\b{x})
	\right]_{\substack{x_j=0\\ j\in\mathscr{S}}}
	\prod_{\substack{1\le k\le r\\[0.5ex] k\not\in\mathscr{S}}} dx_k
	\prod_{j\in\mathscr{S}} \frac{B_{n_j+1}(a_j)}{(n_j+1)!}w^{n_j}
	\nonumber\\&\quad
	+
	w^{N-r}h_{1,w}(\b{x}_{r+1,s})
,
\end{align}
where
\begin{align*}
h_{1,w}(\b{x}_{r+1,s})
:=
	\int_0^\infty g_{r-1,w}(x_{r},\b{x}_{r+1,s}) dx_{r}
.
\end{align*}
The restrictions on $g_{r-1,w}$ give that
$h_{1,w}(\b{x}_{r+1,s})$ satisfies the conditions required of $g_{r,w}$.

However, for fixed $\b{m}\in\N_0^{r-1}$, by \eqref{Eq:EulerMaclaurinShiftedFinal} we have
\begin{align*}
&\frac{1}{w}	\int_{0}^\infty f( w(\b{m}+\b{a}_{r-1}), x_r, \b{x}_{r+1,s} ) dx_r
\\
&\qquad\quad=
	\sum_{m_{r}\geq0} f(w(\b{m}+\b{a}_{r-1}),w(m_{r}+a_{r}), \b{x}_{r+1,s})
	\nonumber\\&\qquad\qquad
	+
	\sum_{n_r \in \mathcal N _N} \frac{B_{n_r+1}(a_r)f^{(\b{0}_{r-1},n_r,\b{0}_{s-r})}(w(\b{m}+\b{a}_{r-1}),0,\b{x}_{r+1,s})}{(n_r+1)!}w^{n_r}
	\nonumber\\&\qquad\qquad
	+
	\sum_{k_r\geq N} \frac{f^{(\b{0}_{r-1},k_r,\b{0}_{s-r})} (w(\b{m}+\b{a}_{r-1}),0,\b{x}_{r+1,s}) a_r^{k_r+1}}{(k_r+1)!}w^{k_r}
	\\&\qquad\qquad
	+
	\frac{w^{N}}{2\pi i}\sum_{n_r\in\NN}
	\frac{ B_{n_r+1}(0)a_r^{N-n_r} }{(n_r+1)!}
	\int_{C_R(0)}
	\frac{ f^{(\b{0}_{r-1},n_r,\b{0}_{s-r})}(w(\b{m}+\b{a}_{r-1}),z_r,\b{x}_{r+1,s}) }{z_r^{N-n_r}(z_r-wa_r)}
	dz_r	
	\nonumber\\&\qquad\qquad
	+
	(-1)^{N}w^{N}	\int_{a_r}^{\infty}
	\frac{  f^{(\b{0}_{r-1},N,\b{0}_{s-r})}(w(\b{m}+\b{a}_{r-1}),wx_r,\b{x}_{r+1,s})  \widetilde{B}_N(x_r-a_r)}{N!}dx_r
.
\end{align*}
This yields that
\begin{align}\label{Eq:EulerMaclaurinMultidimensional2}
&\frac{1}{w} \sum_{\b{m}\in \N_0^{r-1} }
\int_{0}^\infty f( w(\b{m}+\b{a}_{r-1}), x_{r}, \b{x}_{r+1,s} ) dx_r
\nonumber\\
& \qquad =
\sum_{\b{m}\in\N_0^r} f(w(\b{m}+\b{a}),\b{x}_{r+1,s})
+
w^{N-r}h_{2,w}(\b{x}_{r+1,s})  \nonumber\\
&\hspace{8.5em}+
\sum_{n_r\in\mathcal N_N}
\frac{B_{n_r+1}(a_r)}{(n_r+1)!}w^{n_r}	
\sum_{\b{m}\in\N_0^{r-1}}
f^{(\b{0}_{r-1},n_r,\b{0}_{s-r})}(w(\b{m}+\b{a}_{r-1}),0,\b{x}_{r+1,s})	 
,
\end{align}
where	
\begin{align*}
&h_{2,w}(\b{x}_{r+1,s})	:=	
	w^{r-N}	\sum_{\b{m}\in\N_0^{r-1}}		
	\sum_{k_r\geq N} \frac{f^{(\b{0}_{r-1},k_r,\b{0}_{s-r})} (w(\b{m}+\b{a}_{r-1}),0,\b{x}_{r+1,s}) a_r^{k_r+1}}{(k_r+1)!}w^{k_r}
	\\&\quad
	+
	\frac{w^{r}}{2\pi i}
	\sum_{\b{m}\in\N_0^{r-1}}	
	\sum_{n_r\in\NN}
	\frac{ B_{n_r+1}(0)a_r^{N-n_r} }{(n_r+1)!}
	\int_{C_R(0)}
	\frac{ f^{(\b{0}_{r-1},n_r,\b{0}_{s-r})}(w(\b{m}+\b{a}_{r-1}),z_r,\b{x}_{r+1,s}) }{z_r^{N-n_r}(z_r-wa_r)}
	dz_r	
	\\&\quad
	+
	(-1)^{N}w^{r}	
	\sum_{\b{m}\in\N_0^{r-1}}		
	\int_{a_r}^{\infty}
	\frac{  f^{(\b{0}_{r-1},N,\b{0}_{s-r})}(w(\b{m}+\b{a}_{r-1}),wx_r,\b{x}_{r+1,s})  \widetilde{B}_N(x_r-a_r)}{N!}dx_r
.
\end{align*}
We find that $h_{2,w}$ satisfies the conditions of $g_{r,w}$ by reasoning similar to that used for $h_{1,w}$.

For fixed $n_r$, applying \eqref{Eq:EMFull} with $r-1$ gives
\begin{align*}
&\sum_{\b{m}\in\N_0^{r-1}}
f^{(\b{0}_{r-1},n_r,\b{0}_{s-r})}(w(\b{m}+\b{a}_{r-1}),0,\b{x}_{r+1,s})
\\
&=
	\frac{1}{w^{r-1}}\int_{[0,\infty)^{r-1}} f^{(\b{0}_{r-1},n_r,\b{0}_{s-r})}(\b{x}_{r-1},0,\b{x}_{r+1,s}) dx_1\dotsb dx_{r-1}
	\\&\quad
	-
	(-1)^r\sum_{ \b{n}\in \mathcal N_N^{r-1}}	
	f^{(\b{n}, n_r, \b{0}_{s-r} )}( \b{0}_{r-1},0, \b{x}_{r+1,s} )	 
	\prod_{1\le j< r} \frac{B_{n_j+1}(a_j)}{(n_j+1)!}w^{n_j}
	\\&\quad
	+
	\sum_{\emptyset\subsetneq \mathscr{S} \subsetneq \{1,\dots,r-1\}}	
	\frac{(-1)^{|\mathscr{S}|}}{w^{r-1-|\mathscr{S}|}}
	\sum_{\substack{ n_j\in\mathcal N_N\\ j\in\mathscr{S}}}
	\int_{[0,\infty)^{r-1-|\mathscr{S}|}}
	\left[
		\prod_{j\in\mathscr{S}}
		\frac{\partial^{n_j}}{\partial x_j^{n_j}}
		f^{(\b{0}_{r-1},n_r,\b{0}_{s-r})}(\b{x})
	\right]_{\substack{x_r=0\\ x_j=0\\ j\in\mathscr{S}}}
	\prod_{\substack{1\le k< r\\[0.5ex] k\not\in\mathscr{S}}} dx_k
	\nonumber\\&\hspace{19em}\times
	\prod_{j\in\mathscr{S}} \frac{B_{n_j+1}(a_j)}{(n_j+1)!}w^{n_j}
	+
	w^{N-r+1}g_{n_r,r-1,w}(\b{x}_{r+1,s})
,
\end{align*}
where each  $g_{n_r,r-1,w}(\b{x}_{r+1,s})$ satisfies the conditions of $g_{r,w}$.
Plugging this back into \eqref{Eq:EulerMaclaurinMultidimensional2} yields
\begin{align}\label{Eq:EulerMaclaurinMultidimensional3}
&\frac{1}{w}\sum_{\b{m}\in \N_0^{r-1} }
	\int_{0}^\infty f( w(\b{m}+\b{a}_{r-1}), x_{r}, \b{x}_{r+1,s} ) dx_r
\nonumber\\
&=
	\sum_{\b{m}\in\N_0^r} f(w(\b{m}+\b{a}),\b{x}_{r+1,s})
	\nonumber\\&\quad
	+
	\frac{1}{w^{r-1}}	
	\sum_{n_r\in \NN}
	\frac{B_{n_r+1}(a_r)}{(n_r+1)!}w^{n_r}	
	\int_{[0,\infty)^{r-1}} f^{(\b{0}_{r-1},n_r,\b{0}_{s-r})}(\b{x}_{r-1},0,\b{x}_{r+1,s}) dx_1\dotsb dx_{r-1}
	\nonumber\\&\quad
	-
	(-1)^r	
	\sum_{ \b{n}\in \mathcal N_N^{r}}	
	f^{(\b{n}, \b{0}_{s-r} )}( \b{0}, \b{x}_{r+1,s} )	
	\prod_{1\le j\le r} \frac{B_{n_j+1}(a_j)}{(n_j+1)!}w^{n_j}
	\nonumber\\&\quad
	+
	\sum_{\emptyset\subsetneq \mathscr{S} \subsetneq \{1,\dots,r-1\}}	
	\frac{(-1)^{|\mathscr{S}|}}{w^{r-1-|\mathscr{S}|}}
	\sum_{\substack{ n_j\in \mathcal N_N\\ j\in\mathscr{S}\cup\{r\} }}
	\int_{[0,\infty)^{r-1-|\mathscr{S}|}}
	\left[
		\prod_{j\in\mathscr{S}\cup\{r\}}
		\frac{\partial^{n_j}}{\partial x_j^{n_j}}
		f(\b{x})
	\right]_{\substack{x_j=0\\ j\in\mathscr{S}\cup\{r\} }}
	\prod_{\substack{1\le k<r\\[0.5ex] k\not\in\mathscr{S}}} dx_k
	\nonumber\\&\hspace{16.5em}\times
	\prod_{j\in\mathscr{S}\cup\{r\}} \frac{B_{n_j+1}(a_j)}{(n_j+1)!}w^{n_j}
	+
	w^{N-r}h_{3,w}(\b{x}_{r+1,s})
,
\end{align}
where $h_{3,w}(\b{x}_{r+1,s})$ satisfies the conditions required of $g_{r,w}$,
 since it is the sum of $h_{2,w}$ and the finitely many $g_{n_r,r-1,w}$.

We insert \eqref{Eq:EulerMaclaurinMultidimensional3} back into
\eqref{Eq:EulerMaclaurinMultidimensional1} to find that
\begin{align*}
&\frac{1}{w^{r}}\int_{[0,\infty)^{r}} f(\b{x}) dx_1\dotsb dx_{r}
\nonumber\\
&=
	\sum_{\b{m}\in\N_0^r} f(w(\b{m}+\b{a}),\b{x}_{r+1,s})
	-
	(-1)^r	
	\sum_{ \b{n}\in \mathcal N_N^{r}}	
	f^{(\b{n}, \b{0}_{s-r} )}( \b{0}_r, \b{x}_{r+1,s} )	
	\prod_{1\le j\le r} \frac{B_{n_j+1}(a_j)}{(n_j+1)!}w^{n_j}
	\nonumber\\&\quad
	+
	\frac{1}{w^{r-1}}	
	\sum_{n_r\in\mathcal N_N}
	\frac{B_{n_r+1}(a_r)}{(n_r+1)!}w^{n_r}	
	\int_{[0,\infty)^{r-1}} f^{(\b{0}_{r-1},n_r,\b{0}_{s-r})}(\b{x}_{r-1},0,\b{x}_{r+1,s}) dx_1\dotsb dx_{r-1}
	\nonumber\\&\quad
	+
	\sum_{\emptyset\subsetneq \mathscr{S} \subsetneq \{1,\dots,r-1\}}	
	\frac{(-1)^{|\mathscr{S}|}}{w^{r-1-|\mathscr{S}|}}
	\sum_{\substack{ n_j\in \mathcal N_N\\ j\in\mathscr{S}\cup\{r\} }}
	\int_{[0,\infty)^{r-1-|\mathscr{S}|}}
	\left[
		\prod_{j\in\mathscr{S}\cup\{r\}}
		\frac{\partial^{n_j}}{\partial x_j^{n_j}}
		f(\b{x})
	\right]_{\substack{x_j=0\\ j\in\mathscr{S}\cup\{r\} }}
	\prod_{\substack{1\le k<r\\[0.5ex] k\not\in\mathscr{S}}} dx_k
	\nonumber\\&\hspace{26.5em}\times
	\prod_{j\in\mathscr{S}\cup\{r\}} \frac{B_{n_j+1}(a_j)}{(n_j+1)!}w^{n_j}
	\nonumber\\&\quad
	+
	\frac{(-1)^r}{w}
	\sum_{ \b{n}\in \mathcal N_N^{r-1}}	
	\int_0^\infty f^{(\b{n}, \b{0}_{s-r+1} )}( \b{0}_{r-1}, x_{r},\b{x}_{r+1,s} ) dx_{r}	
	\prod_{1\le j< r} \frac{B_{n_j+1}(a_j)}{(n_j+1)!}w^{n_j}
	\nonumber\\&\quad
	-
	\!	
	\sum_{\emptyset\subsetneq \mathscr{S} \subsetneq \{1,\dots,r-1\}}	
	\!	
	\frac{(-1)^{|\mathscr{S}|}}{w^{r-|\mathscr{S}|}}
	\sum_{\substack{ n_j\in \mathcal N_N\\ j\in\mathscr{S}}}
	\int_{[0,\infty)^{r-|\mathscr{S}|}}
	\left[
		\prod_{j\in\mathscr{S}}
		\frac{\partial^{n_j}}{\partial x_j^{n_j}}
		f(\b{x})
	\right]_{\substack{x_j=0\\ j\in\mathscr{S}}}
	\prod_{\substack{1\le k\le r\\[0.5ex] k\not\in\mathscr{S}}} dx_k
	\prod_{j\in\mathscr{S}} \frac{B_{n_j+1}(a_j)}{(n_j+1)!}w^{n_j}
	\nonumber\\&\quad
	+
	w^{N-r}h_{4,w}(\b{x}_{r+1,s})
,
\end{align*}
where $g_{r,w}(\b{x}_{r+1,s}) := h_{1,w}(\b{x}_{r+1,s}) + h_{3,w}(\b{x}_{r+1,s})$.
Upon inspection, we find that the third, fourth, fifth, and
sixth terms in the right hand-side combine exactly as stated
in \eqref{Eq:EMFull}, so that the proof is complete.
\end{proof}

\section{Concluding Remarks}
\label{S:Conclusion}

There is another variant of the Circle Method due to Wright that is closely related to Ingham's Tauberian theorem. Recall that in Theorem \ref{Corollary:CorToIngham}, the analytic behavior of $B(q)$ in a small region near $q = 1$ is sufficient to determine the asymptotic main terms of the coefficients $b_n$. In particular, for a small, fixed $t > 0$, the conditions in \eqref{additional} require an asymptotic formula for $B(e^{-t})$, and uniform bounds for $B(q)$ along a small arc of radius $e^{-t}$.

In contrast, Wright's Circle Method requires an asymptotic formula for $B(q)$ near $q = 1$ (the ``Major arc''), as well as bounds along the remainder of the circle of radius $e^{-t}$ (the ``Minor arc''). However, the conclusion is also stronger, as one obtains an asymptotic expansion for the coefficients $b_n$, so long as one has an asymptotic expansion for $B(e^{-t})$. Wright first introduced this approach in \cite{Wright33}, and applied it to another example in \cite{Wright71}; in the latter case, he also used Euler-Maclaurin summation to derive the asymptotic expansion for $B$.

In their comprehensive article \cite{NgoR}, Ngo and Rhoades gave a generalized version of Wright's Circle Method. Specifically, Proposition 1.8 in \cite{NgoR} requires that
\begin{equation*}
B\left(e^{-z}\right) = z^\beta e^{\frac{\gamma}{z}} \left(\sum_{s=0}^{N-1} \alpha_s z^s + O \left(z^N\right)\right)
\end{equation*}
in the restricted angle $y \leq \Delta |x|$, as well as
\begin{equation*}
B\left(e^{-z}\right) \ll B\left(e^{-x}\right) e^{-\frac{d}{x}}
\end{equation*}
for some $d > 0$ in the remainder of the circle $|z| = e^{-x}$. In that case, the resulting asymptotic expansion for the coefficients is
\begin{equation}
\label{E:NgoRbn}
b_n = \frac{e^{2 \gamma \sqrt{n}}}{2 \sqrt{\pi} n^{\frac{\beta}{2} + \frac34}} \left(\sum_{s = 0}^{N-1} \left(\sum_{r = 0}^s \alpha_{r} \beta_{s,r-s}\right) n^{-\frac{s}{2}} + O\left(n^{-\frac{N}{2}}\right)\right),
\end{equation}
where $\beta_{s,r}$ are certain combinatorial coefficients. Furthermore, they showed that this result applies to a wide class of functions following essentially the same arguments we discussed in Section \ref{S:Ingham}. In particular, they proved that \eqref{E:NgoRbn} holds if $B(q) = L(q) \xi(q)$, where $\xi(q)$ essentially behaves like a modular form, and $L(q)$ has an asymptotic expansion that is derived using Euler-Maclaurin summation.


\begin{thebibliography}{99}

\bibitem{AS64} M. Abramowitz and I. Stegun (Eds.), \emph{Handbook of mathematical functions with formulas, graphs, and mathematical tables,} National Bureau of Standards Applied Mathematics Series, {\bf 55}, Washington, D.C., 1964.

\bibitem{Apo90} T. Apostol, \emph{Modular Functions and Dirichlet Series in Number Theory
Series:} Grad. Texts Math. {\bf 41},
2nd ed., 1990.

\bibitem{AK36}
V. Avakumovi\'c and J. Karamata, \emph{\"Uber einige Taubersche S\"atze, deren Asymptotik von Exponentialcharakter ist. 1. },
Math. Z. {\bf 41} (1936), 345--356.

\bibitem{BCM} J. Borwein, N. Calkin, and D. Manna, \emph{Euler-Boole summation revisited}, Amer. Math. Monthly {\bf 116} (2009), 387--412.


\bibitem{BJSM} K.~Bringmann, C.~Jennings-Shaffer, and K.~Mahlburg, 
\emph{The asymptotic distribution of the rank for unimodal sequences}, 
 arXiv:1910.10790.


\bibitem{BringmannJenningsShafferMahlburgRhoades1}
K.~Bringmann, C.~Jennings-Shaffer, K.~Mahlburg, and R.~Rhoades,
{\it Peak Positions of Strongly Unimodal Sequences},
Trans. Amer. Math. Soc., accepted for publication.



\bibitem{BrisebarrePhilibert1}
N.~Brisebarre and G.~Philibert,
{\it Effective lower and upper bounds for the {F}ourier
	coefficients of powers of the modular invariant {$j$}},
J. Ramanujan Math. Soc. {\bf 20} (2005), 255--282.

\bibitem{Chen} S.-C. Chen, {\it Congruences and asymptotics of Andrews' singular overpartitions}, J. Number Theory {\bf 164} (2016), 343--358.

\bibitem{CKL} Y.-S. Choi, B. Kim, and J. Lovejoy, {\it  Overpartitions into distinct parts without short sequences}, J. Number Theory {\bf 175} (2017), 117--133.

\bibitem{CDF} T. Cotron, R. Dicks, and S. Fleming, {\it Asymptotics and congruences for partition functions which arise from finitary permutation groups}, Res. Number Theory {\bf 2} (2016), Art. 20, 16 pp.


\bibitem{HX} G.-N. Han and H. Xiong, {\it Some useful theorems for asymptotic formulas and their applications to skew plane partitions and cylindric partitions}, Adv. in Appl. Math. {\bf 96} (2018), 18--38.

\bibitem{HardyRamanujan1}
G.~Hardy and S.~Ramanujan,
{\it Asymptotic {F}ormul{\ae} in {C}ombinatory {A}nalysis},
Proc. London Math. Soc. (2) {\bf 17} (1918), 75--115.


\bibitem{Ingham1}
A.~Ingham,
{\it A {T}auberian theorem for partitions},
Ann. of Math. {\bf 42} (1941), 1075--1090.

\bibitem{KK} B. Kim and E. Kim, {\it On the subpartitions of the ordinary partitions, II}, Electron. J. Combin. {\bf 21} (2014), Paper 4.21, 11 pp.


\bibitem{NgoR}
H. Ngo and R. Rhoades, {\it Integer partitions, probabilities and quantum modular forms}, Res. Math. Sci. {\bf 4} (2017).

\bibitem{NIST}
{\it NIST Digital Library of Mathematical Functions}.
\newblock F.~W.~J. Olver, A.~B. {Olde Daalhuis}, D.~W. Lozier, B.~I. Schneider,
  R.~F. Boisvert, C.~W. Clark, B.~R. Miller and B.~V. Saunders, eds.
\newblock http://dlmf.nist.gov/, Release 1.0.23 of 2019-06-15.


\bibitem{Rademacher1}
H.~Rademacher,
{\it On the {P}artition {F}unction p(n)},
Proc. London Math. Soc. (2) \textbf{43} (1937), 241--254.

\bibitem{Rol} L. Rolen, {\it On $t$-core towers and $t$-defects of partitions}, Ann. Comb. {\bf 21} (2017), 119--130.


\bibitem{Wright33}
E.~Wright, \emph{Asymptotic partition formulae II. Weighted partitions,} Proc. London Math. Soc. {\bf 36} (1933), 117--141.

\bibitem{Wright71} E. Wright, \emph{Stacks. II.} Quarterly. J. Math. {\bf 22} (1971), 107--116.


\bibitem{Zagier1}
D.~Zagier,
\emph{The Mellin transform and related analytic techniques.} Appendix to E. Zeidler,
Quantum Field Theory I: Basics in Mathematics and Physics. A Bridge Between Mathematicians
and Physicists,
Springer-Verlag, (2006), 305--323.


\end{thebibliography}
\end{document}